\newtheorem{theorem}{Theorem}[section]
\newtheorem{lemma}[theorem]{Lemma}
\newtheorem{proposition}[theorem]{Proposition}
\newtheorem{corollary}[theorem]{Corollary}
\newtheorem{fact}[theorem]{Fact}
\newtheorem{claim}{Claim}
\theoremstyle{definition}
\newtheorem{definition}[theorem]{Definition}
\theoremstyle{remark}
\newtheorem{remark}[theorem]{Remark}
\newtheorem*{notation}{Notation}
\numberwithin{equation}{section}
\newcommand{\bl}{\begin{lemma}}
\newcommand{\el}{\end{lemma}}
\newcommand{\bfa}{\begin{fact}}
\newcommand{\efa}{\end{fact}}
\newcommand{\bpr}{\begin{proposition}}
\newcommand{\epr}{\end{proposition}}
\newcommand{\bp}{\begin{proof}}
\newcommand{\ep}{\end{proof}}
\newcommand{\bd}{\begin{definition}}
\newcommand{\ed}{\end{definition}}
\newcommand{\bt}{\begin{theorem}}
\newcommand{\et}{\end{theorem}}
\newcommand{\bc}{\begin{corollary}}
\newcommand{\ec}{\end{corollary}}
\newcommand{\bn}{\begin{notation}}
\newcommand{\en}{\end{notation}}
\newcommand{\br}{\begin{remark}}
\newcommand{\er}{\end{remark}}
\newcommand{\bcl}{\begin{claim}}
\newcommand{\ecl}{\end{claim}}
\newcommand{\vvert}[1][\cdot]{\vert #1\vert}
\newcommand{\N}{{\mathbb{N}}}
\newcommand{\al}{\alpha}
\newcommand{\be}{\beta}
\newcommand{\e}{\varepsilon}
\newcommand{\bnum}{\begin{enumerate}}
\newcommand{\enum}{\end{enumerate}}
\newcommand{\mc}{\mathcal}
\newcommand{\mt}{\mc{T}}
\newcommand{\fa}{f_{\alpha}}
\newcommand{\fb}{f_{\beta}}
\newcommand{\fg}{f_{\gamma}}
\numberwithin{subsection}{section}
\numberwithin{equation}{section}
\newcommand{\norm}[1][\cdot]{\lVert #1\rVert}
\DeclareMathOperator{\supp}{supp}
\DeclareMathOperator{\maxsupp}{maxsupp}
\DeclareMathOperator{\minsupp}{minsupp}
\DeclareMathOperator{\ran}{range}
\DeclareMathOperator{\suc}{succ}
\begin{document}
\title[Tightness in spaces with unconditional basis]{Types of tightness in spaces with unconditional basis}

\author{Antonis Manoussakis, Anna Pelczar-Barwacz}
\address[A. Manoussakis]{Department of Sciences, Technical University of Crete, GR 73100, Greece}
\email{amanousakis@isc.tuc.gr}

\address[A. Pelczar-Barwacz]{Institute of Mathematics, Faculty of Mathematics and Computer Science, Jagiellonian University, {\L}ojasiewicza 6, 30-348 Krak\'ow, Poland}
\email{anna.pelczar@im.uj.edu.pl}

\begin{abstract}
We present a reflexive Banach space with an unconditional basis which is quasi-minimal and tight by range, i.e. of type (4) in Ferenczi-Rosendal list within the framework of Gowers' classification program of Banach spaces, but contrary to the recently constructed space of type (4) also tight with constants, thus essentially extending the list of known examples in Gowers' program. The space is defined on the base on a boundedly modified mixed Tsirelson space with use of a special coding function.
\end{abstract}
\subjclass[2000]{46B03}
\keywords{Classification of Banach spaces, quasi-minimal Banach spaces, tight Banach spaces}
\maketitle
\section*{Introduction}

The "loose" classification program for  Banach spaces was started by W.T. Gowers in the celebrated paper \cite{g2}. The goal is to identify classes of Banach spaces which are
\begin{itemize}
 \item hereditary, i.e. if a space belongs to a given class, then any its closed infinite dimensional subspace belongs to the same class,
 \item inevitable, i.e. any Banach space contains an infinite dimensional subspace in one of those classes,
\item defined in terms of the richness of the family of bounded operators on/in the space.
\end{itemize}
The program was inspired by the famous Gowers' dichotomy \cite{g1} exhibiting the first two classes: spaces with an unconditional basis and hereditary indecomposable spaces. Recall that a space is called \textit{hereditarily indecomposable} (HI) if none of its closed infinite dimensional subspaces is a direct sum of its two closed infinite dimensional subspaces.

The research now concentrates on identifying classes in terms of the family of isomorphisms defined in a space. The richness of this family can be stated in various "minimality" conditions, whereas the lack of certain type isomorphic embeddings of subspaces of a given space is described by different types of "tightness" of the considered space.  

Recall that a Banach space is \textit{minimal} if it embeds isomorphically into any its closed infinite dimensional subspace. Relaxing this notion one obtains \textit{quasi-minimality}, which asserts that any two infinitely dimensional subspaces of a given space contain further two isomorphic infinitely dimensional subspaces. Relaxing the notions of minimality or adding additional requirements of choice of isomorphic subspaces in quasi-minimality case leads to different types of minimality of a space, contrasted in \cite{g2,fr1} with different types of tightness, categorized in \cite{fr1}. 

Recall that a subspace $Y$ of a Banach space $X$ with a basis $(e_n)$ is \textit{tight in} $X$ if there is a sequence of successive subsets $I_1<I_2<\dots$ of $\N$ such that the support of any isomorphic copy of $Y$ in $X$ intersects all but finitely many $I_n$'s. $X$ is called \textit{tight} if any of its subspaces is tight in $X$. Adding requirements on the subsets $(I_n)$ with respect to given $Y$ one obtains more specific notions, in particular in \textit{tightness by support} the subsets  $(I_n)$ witnessing tightness of a subspace $Y$ spanned by a block sequence $(x_n)$ are chosen to be supports of $(x_n)$.  W.T. Gowers in \cite{g2} shows that every Banach space contains either a quasi-minimal subspace or a subspace tight by supports. The counterpart for minimality is tightness - by \cite{fr1} every Banach space contains contains a subspace which is either tight or minimal. 

A natural relaxing of the notion of tightness by supports, called \textit{tightness by range} assuring that one can choose subsets $(I_n)$ to be ranges of $(x_n)$ (recall that a range of a vector is the smallest interval containing the support this vector) has also its dichotomy counterpart in a stronger form of quasi-minimality, namely sequential minimality \cite{fr1}. A Banach space $X$ is \textit{sequentially minimal} if it is block saturated with block sequences $(x_n)$ with the following property: any subspace of $X$ contains a sequence equivalent to a subsequence of $(x_n)$. 

Finally, on the side of minimality type properties, one can relax the notion of minimality requiring that the considered space $X$ is only finitely represented in any its infinitely dimensional subspace. Such \textit{local minimality} is contrasted in a dichotomy in \cite{fr1} with \textit{tightness with constants} describing a strict control on the embedding constants - the sequence $(I_n)$ associated to a subspace $Y$ of $X$ has the following property: for any $K\in\N$ the subspace $Y$ does not embed with constant $K$ into $[e_i:\ i\not\in I_K]$.   

The obvious observations relate some of the properties listed above to HI/uncon\-di\-tio\-nal dichotomy - in particular clearly any HI space is quasi-minimal and any tight basis is unconditional. V. Ferenczi and C. Rosendal presented a list of classes within the framework of Gowers' classification program in \cite{fr1} and according to this list they examined in \cite{fr2} the spaces already known. The list of examples of the main classes was completed by the recent work of V. Ferenczi and Th. Schlumprecht \cite{fs}
and by the recent work of S.A. Argyros and the authors \cite{amp}.
We recall now the list of classes developed in \cite{fr1} as stated in \cite{fs}, mentioning also some already known examples.
\begin{theorem}[Ferenczi-Rosendal classification]
Any infinite dimensional Banach space contains a subspace with a basis from one of the following classes:
\begin{enumerate}
\item HI, tight by range (Gowers space with asymptotically unconditional basis of \cite{g} by \cite{fr2}),
\item HI, tight, sequentially minimal (version of Gowers-Maurey space by \cite{fs}),
\item tight by support (Gowers space with unconditional basis of \cite{g0} by \cite{fr2}),
\item with unconditional basis, tight by range, quasi-minimal (unconditional version of Gowers HI space with an asymptotically unconditional basis of \cite{g} by \cite{amp}),
\item with unconditional basis, tight, sequentially minimal (Tsirelson space by \cite{fr2}),
\item with unconditional basis, minimal ($\ell_p$, $c_0$, dual to Tsirelson space by \cite{cjt}, Schlum\-precht space by \cite{as}).
\end{enumerate}
\end{theorem}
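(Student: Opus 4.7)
The plan is to derive this six-class list by successive applications of the dichotomy theorems already contained in the works of Gowers and Ferenczi-Rosendal cited in the introduction, passing to a further infinite dimensional (block) subspace at every stage and exploiting the fact that each property in the list is hereditary once it has been secured. Starting with $X$, apply Gowers' first dichotomy to obtain a subspace $Y_{0}\subseteq X$ which is either hereditarily indecomposable or admits an unconditional basis, thereby splitting the argument into an HI branch leading to classes (1)--(2) and an unconditional branch leading to classes (3)--(6).

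In the unconditional branch, apply Gowers' second dichotomy (quasi-minimal versus tight by support) to $Y_{0}$. The tight by support alternative immediately lands us in class (3). In the opposite alternative we obtain a quasi-minimal subspace with unconditional basis, to which we apply the Ferenczi-Rosendal minimal/tight dichotomy: the minimal side gives class (6), while the tight side is refined further by the Ferenczi-Rosendal sequentially minimal versus tight by range dichotomy, producing class (5) in the sequentially minimal case and class (4) in the tight by range case (where quasi-minimality is inherited from the previous step). Note that tight by support is incompatible with quasi-minimality and tight by range is stronger than tight, so the various possibilities fit together consistently.

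In the HI branch, observe first that any HI space is automatically quasi-minimal, because for any two infinite dimensional closed subspaces $Y,Z$ of $Y_{0}$ the sum $Y+Z$ is not a direct sum, whence $Y\cap Z$ is infinite dimensional and embeds identically into both. Moreover an HI space is never minimal (it is not isomorphic to any of its proper closed subspaces, as any operator on an HI space is a scalar plus strictly singular), so when we apply the minimal/tight dichotomy only the tight alternative can occur; applying finally the sequentially minimal versus tight by range dichotomy gives class (2) or class (1). Tight by support need not be considered here, as it implies an unconditional basis and hence contradicts HI.

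The main point to control throughout this argument is bookkeeping: at each successive dichotomy one must verify that the property fixed at an earlier step (unconditionality, HI, quasi-minimality, tightness) descends to the new block subspace produced by the next dichotomy, so that the final subspace simultaneously satisfies the full list of features attached to its target class. This is where the structural definitions of the properties in \cite{fr1}, formulated in terms of block sequences, pay off, and where the compatibility relations noted above (HI $\Rightarrow$ quasi-minimal, HI $\Rightarrow$ not minimal, tight by support $\Rightarrow$ unconditional basis, tight by range $\Rightarrow$ tight) ensure that no unlisted combination can occur.
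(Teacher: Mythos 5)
This statement is not proved in the paper at all: it is quoted as background, with the list taken from \cite{fr1} as formulated in \cite{fs}, and the parenthetical examples supplied by citations. So there is no ``paper proof'' to match; what you have written is essentially the standard derivation that Ferenczi and Rosendal themselves give, namely the combination of Gowers' first dichotomy (HI versus unconditional, \cite{g1}), Gowers' second dichotomy (quasi-minimal versus tight by support, \cite{g2}), and the Ferenczi--Rosendal dichotomies minimal versus tight and sequentially minimal versus tight by range \cite{fr1}, together with the hereditarity (for block subspaces) of each property already secured, which \cite{fr1} verifies. Your bookkeeping of which dichotomy is applied on which branch, and the compatibility remarks (tight by support excludes quasi-minimality, tight by range implies tight, HI excludes minimality), reproduce that argument correctly.

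One step in your HI branch is wrongly justified, though the conclusion is both true and, for the list as stated, unnecessary. You claim that for two infinite dimensional subspaces $Y,Z$ of an HI space the failure of $Y+Z$ to be a direct sum forces $Y\cap Z$ to be infinite dimensional. This is false: in the Gowers--Maurey space the spans of the even- and odd-indexed basis vectors intersect trivially, yet no two infinite dimensional subspaces form a topological direct sum. Failure of a direct sum means the angle between the subspaces is zero (the infimum of $\|y-z\|$ over unit vectors $y\in Y$, $z\in Z$ vanishes), not that they intersect. The correct route to ``HI implies quasi-minimal'' is the perturbation argument: from the zero-angle property one extracts normalized sequences $(y_n)\subset Y$, $(z_n)\subset Z$ with $\sum_n\|y_n-z_n\|$ small, so that $[y_n]$ and $[z_n]$ are isomorphic. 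Since classes (1) and (2) of the list do not assert quasi-minimality (sequential minimality in (2) comes directly from the third dichotomy), this slip does not affect the validity of your derivation of the classification, but the justification as written should be repaired or deleted.
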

The full Ferenczi-Rosendal list \cite[Theorem 8.4]{fr1} splits all of the above classes with respect to the dichotomy: local minimality versus tightness with constants. Further splitting of some of the above classes concerns the asymptotic structure of the space, i.e. the dichotomy: strong $\ell_p$-asymptoticity versus uniform inhomogeneity of \cite{tc}. 

The aim of the present paper is to examine the class (4) with respect to its local structure. We briefly sketch the proof of local minimality of the space $\mc{X}_{(4)}$ constructed in \cite{amp} and concentrate on the construction of an example on the other edge of the class (4). Namely we prove the following.
\begin{theorem}
There exists a reflexive space $\mathcal{X}_{cr}$ with an unconditional basis which is quasi-minimal, tight by range and with constants.
\end{theorem}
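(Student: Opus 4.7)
The plan is to construct $\mc{X}_{cr}$ as a boundedly modified mixed Tsirelson space of the form $T[(\mc{S}_{n_j},\theta_j)_{j\in\N}]$ enriched with a family of special functionals defined via a suitable coding function $\sigma$, following the scheme used in \cite{amp} but with parameters tuned so as to produce tightness with constants instead of local minimality. First I would fix rapidly decreasing scalars $\theta_j\to 0$ and a rapidly increasing sequence $(n_j)$ of Schreier indices satisfying the usual Tsirelson-type recursive relations. The norming set $W$ would then be the minimal subset of $c_{00}$ closed under the operations associated to $(\mc{S}_{n_j},\theta_j)$ combined with a bounded modification (allowing a controlled number of repetitions inside admissible families) and containing certain special functionals: these are $(\mc{S}_{n_{2j}},\theta_{2j})$-combinations of previously defined functionals whose indices are required to follow the values dictated by $\sigma$ applied to the signature of the previous components in the combination.

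Second, I would verify the basic structural properties. Unconditionality of the basis is automatic, since $W$ will be constructed to be closed under restriction to intervals and change of signs. Reflexivity follows from the standard mixed Tsirelson arguments: $\theta_j\to 0$ sufficiently fast prevents $\ell_1$ from embedding as a block subspace, while the $(\mc{S}_{n_j},\theta_j)$-operations prevent $c_0$; a skipped block decomposition together with the submultiplicativity estimates for $(\theta_j n_j)$ then yields that the basis is boundedly complete and shrinking.

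Third, I would address the three behavioural properties. Quasi-minimality is established by the standard rapidly increasing sequences (RIS) technique: given two block subspaces, one constructs RIS in each, forms $(\mc{S}_{n_j},\theta_j)$-averages of equal length from both, and applies a basic inequality to evaluate functionals from $W$, obtaining equivalence of the two averaged sequences. Tightness by range is the purpose of the coding $\sigma$: if $(x_n)$ is a block sequence and $(y_n)$ is equivalent to $(x_n)$ with $\ran y_n$ disjoint from $\ran x_n$ for infinitely many $n$, then no special functional designed to evaluate large on $(y_n)$ can be coded by the signatures arising from $(x_n)$, and the conditional structure carried by $\sigma$ forces a contradiction along the lines of \cite{amp}.

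The main obstacle, and the genuinely new part of the argument, is proving tightness \emph{with constants} rather than local minimality. For a normalized block sequence $(x_n)\subset\mc{X}_{cr}$ with ranges $(I_n)$ I need to show that $[x_n]$ does not $K$-embed into $[e_i : i\notin I_K]$ for any fixed $K\in\N$. The strategy is to construct, inside any block subspace $Y$, an $\ell_1^k$-average $u^{(K)}\in Y$ of length $k=n_{j(K)}$ supported on a prescribed range and then show that its image under any candidate embedding forces evaluations by special functionals whose norm grows like $\theta_{j(K)}^{-1}$ when the whole of $W$ is available, but is bounded by a constant depending only on $K$ when all functionals are restricted to coordinates outside $I_K$, by a standard tree analysis of admissible functionals with supports avoiding $I_K$. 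Here the bounded modification parameter must grow with $K$, so that the gain coming from coded functionals (which exploit the prescribed range of $u^{(K)}$) outweighs any evaluation available outside $I_K$. The delicate point is that the choice of $\theta_j$, $n_j$, the bound on modifications, and the growth of $\sigma$ must be interlocked so that the quasi-minimality estimate, the range-tightness argument, and this new distortion estimate all close simultaneously without breaking one another.
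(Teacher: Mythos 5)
Your proposal locates the difficulty in the wrong place, and in both of the two genuinely delicate points it substitutes hand-waving for the mechanism that actually makes the construction work. First, tightness with constants is \emph{not} the new hard part and does not require any bespoke distortion estimate with a ``bounded modification parameter growing with $K$'' (a parameter of the norming set cannot depend on the subspace $Y$ or on $K$ anyway, since the space is fixed once and for all). In the paper it is the soft part: because the norming set is closed under $(\mc{S}_{n_{2j}},m_{2j}^{-1})$-operations on allowable (disjointly supported) families, the basis is $\ell_1$-strongly asymptotic, and a reflexive strongly asymptotic $\ell_1$ space is tight with constants by Ferenczi--Rosendal \cite[Prop. 4.2]{fr1}. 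Your proposed route for this property, besides being unnecessary, is not shown to close: you never explain why evaluations ``outside $I_K$'' are bounded by a constant depending only on $K$, and the claim that coded functionals give a gain of order $\theta_{j(K)}^{-1}$ precisely on the prescribed range is exactly the kind of statement that needs the full tree-analysis machinery you do not supply.

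Second, the genuinely new difficulty is quasi-minimality, and your sketch (``standard RIS technique plus a basic inequality'') does not address it. Once special functionals are present, equivalence of averaged RIS from two arbitrary block subspaces $Y,Z$ fails unless a norming functional built along the $Y$-vectors can be converted into a norming functional acting on the corresponding $Z$-vectors \emph{without violating the coding}; with a coding of the usual Gowers--Maurey type (depending on the actual previous functionals, as in your ``signature'' description) this conversion is impossible, and indeed the whole point of the paper is the tension you gloss over: the coding must be complex enough to kill sequential minimality and give tightness by range, yet coarse enough to survive the replacement. The paper resolves this by making dependent sequences coded by sequences of \emph{intervals merely containing} the ranges of the previous components (Definition of dependent sequences and Remark on the norming set), choosing the $Y$- and $Z$-vectors with identical ranges and coefficients, and then running a four-step reduction of the tree-analysis followed by a replacement of the innermost nodes $f_{\gamma_{i,k}}$ by $z_{i,k}^*$; correctness of the replacement rests precisely on the interval-based coding and on the growth condition \eqref{eq:2} on $\sigma$. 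The same interval-based coding, used with the intervals $\ran z_r$ themselves, is what drives tightness by range (any special functional nearly norming a coded combination in $[z_r]$ must essentially vanish on $T x$ when $\supp Tz_r\cap\ran z_r=\emptyset$). Without specifying this structure of the coding and the replacement argument, your plan for quasi-minimality (and hence for the theorem) has a genuine gap.
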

As it was mentioned above the example of \cite{amp} is an unconditional version of the Gowers HI space with an asymptotically unconditional basis \cite{g0}, using the standard framework of a space constructed on the basis of a mixed Tsirelson space, defined by a norming set closed under certain operations. The standard operations include taking averages of certain block sequences (as $(\mc{A}_n, \theta_n$-operations or $(\mc{S}_n,\theta_n)$-operations), projections on the subsets of $\N$ or intervals in $\N$, change of signs etc. Taking averages could be restricted to a special family of block sequences, picked usually by means of so-called "coding function"; this method, introduced by B. Maurey and H. Rosenthal, exploited by W.T. Gowers and B. Maurey lead to the construction of the first HI space \cite{gm}. 
 
In the case of the space $\mc{X}_{(4)}$ in  \cite{amp} its norming set is closed under change of signs, certain $(\mc{A}_{n_j},\frac{1}{m_j})$-operations, projections on intervals, and - in order to ensure tightness by range -  under the special "Gowers operation" used in \cite{g0}, i.e. scaled projection on $\mc{S}_1$ sets. This structure allows also for the usual reasoning on presence of finitely dimensional copies of $\ell_\infty$ in every subspace of $\mc{X}_{(4)}$ (cf. \cite{m}), thus also for local minimality. 

The typical way to provide tightness with constants in the considered space is to base its construction on the Schreier families instead of families $(\mc{A}_n)$. The strong asymptotic structure of the space, by \cite{fr1}, provides the desired type of tightness. However, using Schreier families in the definition of the norming set neutralizes the effect of "Gowers operation" used in the example of \cite{amp}. Therefore in order to construct the space $\mc{X}_{cr}$ with desired properties one needs other tools which would "spoil" the sequential minimality of regular modified mixed Tsirelson spaces defined by Schreier families proved in \cite{kmp}.

We present here a variant of a standard construction on the basis of boundedly modified mixed Tsirelson spaces \cite{adkm}. The norming set of the constructed Banach space $\mc{X}_{cr}$ is closed under change of signs, projection on intervals, and $(\mc{S}_{n_j},\frac{1}{m_j})$-operations on certain sequences, partly defined by a carefully chosen coding function. As it was mentioned earlier, tightness with constants is ensured by a strong asymptotic structure of the space, the quasi-minimality of the space follows by the regularity of the applied operations, whereas tightness by range follows by the use of the coding function. The key point in the choice of the coding function is its "complexity level" - high enough to spoil sequential minimality and ensure tightness by range, but still low enough to preserve the quasi-minimality of the built space. The presented construction exhibits large possibilities - within the framework of spaces built on the basis of mixed Tsirelson space - of designing the 
properties 
of the constructed space by means only of the coding function involved in the definition of the norming set.

We describe now the contents of the paper. We recall the standard notation in the first section. The second section is devoted to the definition and basic properties of our space $\mc{X}_{cr}$, while in the third section we prove the quasi-minimality and tightness properties of $\mc{X}_{cr}$. In the last section we sketch the proof of the local minimality of the space $\mc{X}_{(4)}$ of \cite{amp}. 

\section{Preliminaries}
We recall  the basic definitions and standard notation.

By a {\em  tree}  we shall mean a non-empty partially ordered  set $(\mt, \leq)$ for which the set $\{ y \in \mt:y \leq x \}$ is linearly ordered and finite for each $x \in \mt$. If $\mt' \subseteq \mt$ then we say that $(\mt',\leq)$ is a {\em subtree}  of $(\mt,\leq)$. The tree $\mt$ is called {\em finite}  if the set $\mt$ is finite. The \textit{root} is the smallest element of the tree (if it exists).   
A {\em branch}  in $\mt$ is a maximal linearly ordered set in $\mt$. The {\em immediate successors}  of $x \in \mt$, denoted by $\suc (x)$, are all  the nodes $y \in \mt$ such that $x < y$ but there is no $z \in \mt$ with $x < z < y$. If $X$ is a linear space, then a {\em tree in $X$}  is a tree whose nodes are vectors in $X$.

Let $X$ be a Banach space with a basis $(e_i)$. The \textit{support} of a vector $x=\sum_i x_i e_i$ is the set $\supp x =\{ i\in \N : x_i\neq 0\}$, the \textit{range} of $x$, denoted by $\ran x$ is the minimal interval containing $\supp x$. Given any $x=\sum_i a_ie_i$ and finite $E\subset\N$ put $Ex=\sum_{i\in E}a_ie_i$. We write $x<y$ for vectors $x,y\in X$, if $\max\supp x<\min \supp y$. A \textit{block sequence} is any sequence $(x_i)\subset X$ satisfying $x_{1}<x_{2}<\dots$. A closed subspace spanned by an infinite block sequence $(x_n)_{n\in\N}$ is called a \textit{block subspace} and denoted by  $[x_n: n\in\N]$. 

We shall consider two hierarchies of families of finite subsets of $\N$, namely families $(\mc{A}_n)_{n\in\N}$, defined as $\mc{A}_n=\{F\subset\N:\# F\leq n\}$ for each $n\in\N$, and \textit{Schreier families} $(\mc{S}_n)_{n\in\N}$, introduced in \cite{aa}, defined by induction:
\begin{align*}
\mc{S}_0 &=\{\{ k\}:\ k\in\N\}\cup\{\emptyset\}, \\
\mc{S}_{n+1}&  =\{F_1\cup\dots\cup F_k:\ k\leq F_1<\dots<F_k, \
f_1,\dots, f_k\in \mc{S}_n\}, \ \ n\in\N\,.
\end{align*}
We can also define modified Schreier families $(\mc{S}_n^M)_{n\in\N}$ by replacing in the definition above the condition "$F_1<\dots<F_k$" by "$F_1,\dots,F_k$ are pairwise disjoint". The following observation proves that these families coincide. 
\begin{lemma}\label{schreier} \cite[Lemma 1.2]{adkm}
 For any $n\in\N$ we have $\mc{S}_n=\mc{S}_n^M$.
\end{lemma}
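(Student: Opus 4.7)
Proof plan:

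The inclusion $\mc{S}_n \subseteq \mc{S}_n^M$ is immediate from the definitions, since a successive decomposition $F_1 < F_2 < \dots < F_k$ is in particular a pairwise disjoint one. The content of the lemma lies in the reverse inclusion $\mc{S}_n^M \subseteq \mc{S}_n$, which I would establish by induction on $n$.

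The base case $n = 0$ is trivial, since both families coincide with $\{\{k\} : k\in\N\}\cup\{\emptyset\}$. For the inductive step, assume $\mc{S}_{n-1} = \mc{S}_{n-1}^M$, and take $F \in \mc{S}_n^M$ with decomposition $F = F_1 \cup \dots \cup F_k$, pairwise disjoint, $F_i \in \mc{S}_{n-1}^M = \mc{S}_{n-1}$ by the inductive hypothesis, and $k \leq \min F$. The target is a successive decomposition $F = G_1 < \dots < G_q$ with $G_j \in \mc{S}_{n-1}$ and $q \leq \min F$.

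My first move would be to reorder the $F_i$ so that $\min F_1 < \min F_2 < \dots < \min F_k$, set $m_i = \min F_i$ and $m_{k+1}=+\infty$, and consider the sets $G_i := F \cap [m_i, m_{i+1})$. These $k$ sets are successive, partition $F$, satisfy $\min G_i = m_i \geq k$, and (since $F_j \subseteq [m_j,\infty)$ for each $j$) each $G_i$ is a pairwise disjoint union of at most $i$ subsets of the form $F_j \cap [m_i, m_{i+1})$ with $j \leq i$; by the hereditary property of $\mc{S}_{n-1}$, each such subset lies in $\mc{S}_{n-1}$.

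The main obstacle, and the technical core of the argument, is upgrading each $G_i$ -- which a priori is only seen to be a pairwise disjoint union of $\mc{S}_{n-1}$ sets -- to an actual member of $\mc{S}_{n-1}$, while keeping the total number of successive pieces bounded by $\min F$. I would close this gap with a secondary induction, for instance on $|F|$, combined with careful use of the hereditary and spreading properties of $\mc{S}_{n-1}$; alternatively, one can establish the stronger auxiliary lemma that any pairwise disjoint collection of $p$ sets in $\mc{S}_{n-1}$ admits a successive $\mc{S}_{n-1}$-decomposition into at most $p$ pieces, and then apply it directly to the $F_i$. The bookkeeping that ensures the final count does not exceed $\min F$ is where the proof genuinely uses the fine combinatorial structure of the Schreier families, and is the step most likely to demand delicate attention.
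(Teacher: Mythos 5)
The paper itself does not prove this lemma; it quotes it from \cite[Lemma 1.2]{adkm}, so your proposal has to stand on its own, and as it stands it has a genuine gap: the entire combinatorial core is deferred, and the one concrete device you do commit to does not work. The trivial inclusion and the outer induction on $n$ are fine, but cutting $F$ at the ordered minima $m_1<\dots<m_k$ of the $F_i$ and then ``upgrading'' each block $G_i=F\cap[m_i,m_{i+1})$ to a member of $\mc{S}_{n-1}$ fails already at the first nontrivial level. Take $n-1=1$, $F_1=\{2,100\}$, $F_2=\{3,4,5\}$: both lie in $\mc{S}_1$, they are disjoint, and $k=2\leq\min F$. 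Your blocks are $G_1=\{2\}$ and $G_2=\{3,4,5,100\}$, and $G_2\notin\mc{S}_1$ (four elements, minimum $3$); nevertheless $F\in\mc{S}_2$, e.g.\ via the cut $\{2,3\}<\{4,5,100\}$. So the successive pieces cannot be read off from the minima alone: the cut points must be adapted to the internal structure of the $F_j$'s, and elements of a given $F_j$ must in general be redistributed over several successive pieces. That redistribution, together with the bookkeeping keeping the number of pieces at most $\min F$, is exactly the content of \cite[Lemma 1.2]{adkm}, and it is precisely what your plan leaves open.

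Your alternative route --- the auxiliary statement that any $p$ pairwise disjoint members of $\mc{S}_{n-1}$ admit a decomposition of their union into at most $p$ successive members of $\mc{S}_{n-1}$ --- is indeed the right strengthening: combined with $\min F\geq k$ it immediately gives $\mc{S}_n^M\subseteq\mc{S}_n$ by induction. But this statement is essentially equivalent to the hard direction of the lemma itself, and you assert it rather than prove it; ``a secondary induction on $|F|$ with careful use of the hereditary and spreading properties'' names tools without supplying the argument, and the counterexample above shows the naive splitting underlying your first attempt cannot be the basis of that induction. In sum, the proposal is a plausible plan rather than a proof; to complete it you should either reproduce the inductive recombination argument of \cite{adkm} or give a full proof of your auxiliary lemma.
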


Fix a family $\mc{M}$ of finite subsets of $\N$. We say that a sequence $(E_1,\dots, E_k)$ of subsets of $\N$ is
 \bnum
 \item $\mc{M}$-\textit{admissible}, if $E_1<\dots<E_k$ and $(\min E_i)_{i=1}^k\in\mc{M}$,
 \item $\mc{M}$-\textit{allowable}, if $(E_i)_{i=1}^k$ are pairwise disjoint and $(\min E_i)_{i=1}^k\in\mc{M}$.
 \enum
Let $X$ be a Banach space with a basis. We say that a sequence $x_1,\dots ,x_n$ is $\mc{M}$-\textit{admissible} (resp. \textit{allowable}), if $(\supp x_i)_{i=1}^n$ is $\mc{M}$-admissible (resp. allowable).

A $(\mc{M},\theta)$-operation, with $0<\theta\leq 1$, is an operation associating with a sequence $(x_1,\dots,x_k)$ with $(\minsupp x_i)_{i=1}^k\in\mc{M}$ the vector $\theta(x_1+\dots+x_k)$.

Fix sequences $(\theta_n)_n\subset (0,1)$, $(k_{n})\nearrow+\infty$ and $(\mc{M}_n)$ with either $\mc{M}_n=\mc{A}_{k_n}$ for all $n$ or $\mc{M}_n=\mc{S}_{k_n}$ for all $n$. A mixed Tsirelson space $T[(\mc{M}_n, \theta_n)_n]$ is defined to be the completion of $c_{00}(\N)$ endowed with the norm, whose norming set $K$ is the smallest subset of $c_{00}(\N)$ containing $(\pm e_n)_n$, where $(e_n)_n$ is the canonical basis of $c_{00}(\N)$, and closed under all $(\mc{M}_n,\theta_n,)$-operations on block sequences. If one allows also $(\mc{M}_n,\theta_n)$-operations on sequences of vectors with pairwise disjoint supports for some $n\in\N$, one gets (boundedly) modified mixed Tsirelson spaces, cf. \cite{adm}. 

The first famous member of the family of spaces $T[(\mc{A}_{k_n}, \theta_n)_n]$ is Schlum\-precht space \cite{s}, the first space known to be arbitrarily distortable, see also  \cite{m} for a study of this class of spaces. Spaces $T[(\mc{S}_{k_n},\theta_n)_n]$ were introduced in \cite{ad}. Allowing some $(\mc{M}_n,\theta_n)$-operations on special block sequences, defined by means of a suitably chosen coding function, opened the gate to Gowers-Maurey construction of the first known HI space \cite{gm}. Adding in the definition of the norming set other operations of different kind allowed for building spaces enjoying extreme properties, as HI asymptotically unconditional space of Gowers \cite{g} or quasi-minimal and tight by range space with unconditional basis \cite{amp}. 
\section{Definition of the space $\mc{X}_{cr}$}
The space we shall define is constructed on the basis of boundedly mo\-di\-fied mixed Tsirelson space $T_M[(\mc{S}_{n_j},\frac{1}{m_j})_j]$ with use of an additional coding function. First we describe the basic ingredients of the construction. 

We fix two sequences of natural numbers $(m_{j})_{j}$ and $(n_{j})_{j}$ defined
recursively as follows. We set $m_{1} =2$ and $m_{j+1}=m_{j}^{5}$ and $n_{1}=4$
and $n_{j+1}=15s_{j}n_{j}$ where $s_{j}=\log_{2}(m_{j+1}^{3})$, $j\geq 1$. 

Let $X$ be a Banach space with a basis $(e_i)$ satisfying the following:
\begin{equation}\label{standard}
\frac{1}{m_{2j}}\sum_i\norm[E_ix]\leq \norm[x] \text{ for any }x\in X,\ j\in\N, \ (E_i) - \mc{S}_{n_{2j}}\text{-admissible }
\end{equation}
We recall now standard facts on vectors of a special type in such a space.
\begin{definition}[Special convex combination] 
Fix $\e>0$ and $n\in\N$. 

We call a vector $y=\sum_{i\in F}a_ie_i$ an $(n,\e)$-\textit{basic special convex combination} (basic scc), if $F\in\mc{S}_n$ and scalars $(a_i)\subset [0,1]$ satisfy $\sum_{i\in F}a_i=1$ and $\sum_{i\in G}a_i<\e$ for any $G\in\mc{S}_{n-1}$. 

We call a vector $x=\sum_{i\in F}a_ix_i$ an $(n,\e)$-\textit{special convex combination} (scc) of $(x_i)$, if the vector $y=\sum_{i\in F}a_ie_{\minsupp x_{i}}$ is an $(n,\e)$-basic scc. 

We call a scc $x=\sum_{i\in F}a_ix_i$ of a normalized block sequence $(x_i)$ a \textit{seminormalized scc}, if $\norm[x]\geq 1/2$.
\end{definition}
It is well known, see \cite[Prop. 2.3]{Artol}, that for every $n\in\N$,
$\e>0$ and every $L\subset\N$ there exists an basic $(n,\e)$-scc,
$x=\sum_{n\in F}a_{n}e_{n}$ such that $F$ is a maximal $\mc{S}_{n}$-subset of $L$.   The next lemma provides seminormalized
 scc's in every block subspace.
\begin{lemma}\label{f1} \cite[Lemma 4.5]{adm} For every $n\in \N$, $\e>0$ there is $l(n,\e)\in\N$ such that for any block sequence $(x_i)$  there is $F\in\mc{S}_{l(n,\e)}$ such that there is an $(n,\e)$-scc $x$ supported on $(x_i)_{i\in F}$ with $\norm[x]\geq 1/2$.
\end{lemma}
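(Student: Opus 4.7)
The plan is to find a seminormalized scc by combining Artol's existence result at two nested levels of the Schreier hierarchy, using the lower bound \eqref{standard} to convert admissibility into a genuine norm estimate. I would fix $j_0\in\N$ whose precise size is calibrated to the target $\e$---so that the cumulative loss from applying \eqref{standard} once is comfortably below the target $1/2$---and set $l(n,\e):=n+n_{2j_0}$.

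The construction itself proceeds in two layers. Starting from the block sequence $(x_i)$, assumed normalized without loss of generality, I would use Artol's result on disjoint intervals of $(\minsupp x_i)$ to produce many successive basic $(n,\e/2)$-scc's $u^{(s)}=\sum_{i\in F^{(s)}}a^{(s)}_ix_i$ with $F^{(s)}$ maximal $\mc{S}_n$-subsets, arranged so that $(u^{(s)})$ is $\mc{S}_{n_{2j_0}}$-admissible. At the outer level I again invoke Artol to produce coefficients $(b_s)$ coming from a basic $(n_{2j_0},\e/2)$-scc, and form $x:=\sum_s b_s u^{(s)}$. Its underlying index set is $F:=\bigcup_s F^{(s)}\in\mc{S}_{n+n_{2j_0}}=\mc{S}_{l(n,\e)}$, using the standard composition property for Schreier families (cf.\ Lemma \ref{schreier} for the allowable/admissible interchange), and the compound coefficient vector $(b_s a^{(s)}_i)$ defines a basic $(n+n_{2j_0},\e)$-scc after the routine error bookkeeping. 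Applying \eqref{standard} to the $\mc{S}_{n_{2j_0}}$-admissible family $(\supp u^{(s)})$ then gives
\[
\norm[x]\ \geq\ \frac{1}{m_{2j_0}}\sum_s b_s\,\norm[u^{(s)}].
\]

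The main obstacle is the quantitative step of guaranteeing $\norm[x]\geq 1/2$ from this estimate, since \eqref{standard} alone contributes only a factor $1/m_{2j_0}$ times an average of the $\norm[u^{(s)}]$. I would resolve this by a dichotomy: either some individual $u^{(s)}$ already satisfies $\norm[u^{(s)}]\geq 1/2$---in which case taking $F:=F^{(s)}\in\mc{S}_n\subseteq\mc{S}_{l(n,\e)}$ finishes the proof---or else one iterates the two-layer construction at a higher outer level $n_{2j}$ for a suitably larger $j$, exploiting that none of the $\norm[u^{(s)}]$ can collapse below $1/m_{2j_0}$ by another application of \eqref{standard}. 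Since the norms of scc's are bounded above by $1$, only finitely many iterations, depending solely on $n$ and $\e$, can occur before a seminormalized scc is produced; the depth of iteration determines the final value of $l(n,\e)$. The careful matching of the error parameters $\e/2$ across the two levels is the fiddly---but essentially bookkeeping---part of the argument.
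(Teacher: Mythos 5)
Your first layer is the right raw material --- inner $(n,\e)$-scc's, an outer scc on top of them, and the lower estimate \eqref{standard} are exactly the ingredients of \cite[Lemma 4.5]{adm}, which the paper cites rather than proves --- but the termination mechanism in your dichotomy has a genuine gap. In the second horn nothing ever forces a seminormalized $(n,\e)$-scc to appear. If every inner scc has norm in $[m_{2j_0}^{-1},1/2)$, the estimate $\norm[x]\geq m_{2j_0}^{-1}\sum_s b_s\norm[u^{(s)}]$ is perfectly consistent with everything: since $x$ is a convex combination of the $u^{(s)}$ one even has $\norm[x]\leq\max_s\norm[u^{(s)}]<1/2$, and repeating the two-layer step at a higher outer level $n_{2j}$ only multiplies your lower bounds by further factors $m_{2j}^{-1}$, so the bounds degrade rather than climb toward $1/2$. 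The remark that scc norms are bounded above by $1$ yields no lower bound at all, so ``only finitely many iterations can occur'' is a non sequitur. Note also that even if the outer vector were seminormalized it is an $(n+n_{2j_0},\e)$-scc, not an $(n,\e)$-scc, so it could not serve as the vector the lemma demands; the conclusion must come from an inner-level scc of some derived block sequence.

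The missing idea is renormalization, which is the engine of the actual proof. Assume toward a contradiction that every $(n,\e)$-scc of every normalized block sequence produced along the way has norm $<1/2$. Replace each failed scc $u^{(s)}$ by $u^{(s)}/\norm[u^{(s)}]$ (gaining a factor $>2$), form $(n,\e)$-scc's of these normalized vectors, and iterate $k$ times. Expressed in the original normalized $(x_i)$, the level-$k$ vector is $\sum_i C_i x_i$ with $\sum_i C_i>2^{k-1}$, while the indices used form a set in roughly $\mc{S}_{kn}$ by the convolution property of Schreier families. Choosing $j$ with $2^{k-1}>m_{2j}$ and $kn\leq n_{2j}$ --- possible for every $n$ since $n_{2j}$ grows far faster than $\log_2 m_{2j}$ --- a single application of \eqref{standard} gives $\norm[\sum_i C_i x_i]\geq m_{2j}^{-1}\sum_i C_i>1$, contradicting the fact that the level-$k$ vector is a convex combination of norm-one vectors. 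Hence at some level $r\leq k$ an $(n,\e)$-scc of norm $\geq 1/2$ appears, supported on an $\mc{S}_{l(n,\e)}$-set of the $(x_i)$ with $l(n,\e)$ of order $kn$; together with the composition of scc's with suitably decreasing error parameters (the bookkeeping you correctly anticipated), this is the argument of \cite[Lemma 4.5]{adm}. Your version, lacking the gain from normalization, has no source of contradiction and no reason to terminate.
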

Recall for any $n\in\N$ and $\e>0$  the constant $l(n,\e)\geq n$ depends only on sequences $(n_j)$ and $(m_j)$. 

Given any $n\in\N$ let $\rho(n)$ be the constant $l(n_{2s},m_{2s}^{-2})$ obtained by the above Lemma, where $s\in\N$ is minimal with 
\begin{equation}\label{rho}
n^2\leq m_{2s}.
\end{equation}
We fix a partition of $\N$ into two infinite sets $L_{1},L_{2}$. Let
$$
\mc{G}=\{(E_{1},\dots,E_{n}):\ E_{1}<E_{2}<\dots<E_{n} \text{ intervals of }\N,\
n\in\N\}
$$
and take a  $1-1$ \textit{coding function} $\sigma:\mc{G}\to 2L_{2}$ such that for any sequence
$(E_{1},\dots, E_{n})\in\mc{G}$, $n\geq 2$, we have
\begin{equation}
  \label{eq:2}
n_{\sigma(E_{1},\dots, E_{n})}\geq \rho(\max E_{n})+\max E_n\, .
\end{equation}
Let $W$ be the smallest subset of $c_{00}(\N)$ such that
\begin{enumerate}
\item[($\alpha$)] $(\pm e_n)_n\in W$, where $(e_n)_n$ is  the canonical basis of $c_{00}(\N)$
\item[($\beta$)] for any $f\in W$ and $g\in c_{00}(\N)$ with $\vvert[f]=\vvert[g]$ also $g\in W$,
\item[($\gamma$)] $W$ is closed  under the
$(\mc{S}_{n_{2j}},m_{2j}^{-1})$-operations on any allowable sequences,
\item[($\delta$)] $W$ is closed under the 
$(\mc{S}_{n_{2j+1}},m_{2j+1}^{-1})$-operations on $(2j+1)$-depen\-dent sequences.
\end{enumerate}
In order to compete the definition we need to define dependent sequences.
\begin{definition}[Dependent sequence]\label{special}
A block sequence $(f_{i})_{i\in F}\subset c_{00}(\N)$ is called a $(2j+1)$-\textit{dependent sequence} if $(f_{i})_{i\in F}$ is $\mc{S}_{n_{2j+1}}$-admissible, each $f_{i}$ is of the form  $f_{i}=m_{2j_{i}}^{-1}\sum_{k\in K_i}f_{i,k}$ and for some sequences $(E_r)_{r\in A}\in \mc{G}$, with the index set $A\subset\N$ represented as a sum $\cup\{A_{k}: k\in K_i, i\in F\}$ of intervals, the following hold
\begin{enumerate}
  \item $w(f_{1})=m^{-1}_{2j_{1}}$, $j_{1}\in L_{1}$ and $m_{2j_1}>n_{2j+1}$,
   \item $2j_{i+1}=\sigma(E_j:\ j\in A_{k},\ k\in K_l,\ l\leq i)$ for any $i<\max F$,
  \item $\supp f_{i,k}\subset \cup_{j\in A_{k}}E_{j}$ for any $k\in K_i$, $i\in F$,
  \item $A_{\min K_i}$ is a singleton for each $i\in F$, and 
$(E_r)_{r\in A_{k}}$ is  $\mc{S}_{\rho(\max E_{\max  A_{k-1}})}$-admissible for any $k\in K_i$, $k>\min K_i$, $i\in F$.
  \end{enumerate}
Any functional of the form $m_{2j+1}^{-1}\sum_{i\in F}f_i$, where $(f_i)_{i\in F}$ is a dependent sequence, is called a \textit{special functional}.
\end{definition}
Notice that as $(E_r)_r\in\mc{G}$, by (3) each family $(f_{i,k})_{k\in K_i}$ is $\mc{S}_{n_{2j_i}}$-admissible.

Let $\mc{X}_{cr}$ be the completion of $c_{00}(\N)$ with the norm $\norm[\cdot]$ defined by $W$ as its norming set, i.e. $\norm[\cdot]=\sup\{|f(\cdot)|:\ f\in W\}$. 

\begin{remark}\label{rem-k}
a) The canonical basis $(e_n)_{n}$ of  $\mc{X}_{cr}$ is 1-sign unconditional by $(\beta)$.

b) Note that in the definition of the dependent sequences the 
admissibility of the functional
chosen in the $(i+1)$-th step depends not on 
the supports or ranges of previously chosen functionals, but on the choice of
some intervals containing ranges of the previously chosen functionals.

c)  By (3)  of the definition of the special functionals it follows that we can choose $f_{i,k}=0$.
Also by (3) any restrictrion of a special functionals to a subset of $\N$ is also a special functional. 
This property easily implies also that the set $W$ is closed under the projections on subsets of $\N$ and 
$(e_n)_{n}$ is a 1-unconditional basis.

d) The space $\mc{X}_{cr}$ satisfies \eqref{standard} by $(\gamma)$.

e) Reflexivity of $\mc{X}_{cr}$ can be proved by repeating the argument of \cite{ad}.

f) The norming set $W$ of $\mc{X}_{cr}$ is contained in the norming set of the 
modified mixed Tsirelson space 
$T_M[(\mc{S}_{n_j},\frac{1}{m_j})_j]$, cf. \cite{adm}. 
\end{remark}
As in the previous cases of norming sets defined to be closed under certain operations every functional $f\in W$ admits a tree-analysis which in the present case is described as follows.
\begin{definition}[Tree-analysis of a functional]
  \label{treeanal}
Let $f\in W$. A family $(\fa)_{\al\in \mc{T}}$, where $\mc{T}$ is a rooted finite tree is a tree-analysis of $f$ if the following are satisfied
 \begin{enumerate}
 \item $f=f_{0}$ where $0$ denotes the root of $\mc{T}$.
\item If $\al$ is a maximal element of $\mc{T}$ then $\fa=\pm e_{n}^{*}$ for some $n\in\N$.\\
If $\al\in\mc{T}$ is not maximal, then one of the following conditions holds
\item $\fa=\frac{1}{m_j}\sum_{\beta\in \suc (\al)}f_\beta$ with $(f_\beta)_{\beta\in\suc (\al)}$ $\mc{S}_{n_j}$-allowable, for $j\in 2\N$,
\item $\fa=\frac{1}{m_j}\sum_{\beta\in \suc (\al)}f_\beta$ with $(f_\beta)_{\beta\in\suc (\al)}$ $\mc{S}_{n_j}$-admissible, for $j\in 2\N+1$. \\
In the above two cases we set the weight $w(\fa)$ of $\fa$ as $w(\fa)=m_{j}^{-1}$.
 \end{enumerate}
For any $0\neq\al\in\mt$ we set $tag(\al)=\prod_{\beta\prec\al} w(\fb)$ and $ord(\al)$ to be equal to the length of the branch linking $\al$ and the root $0$. 
\end{definition}
\begin{lemma}\cite[Lemma  4.6]{adm}  \label{admi} Let $j\in\N$, $f\in W$ be a norming functional with a tree-analysis $(\fa)_{\al\in\mt}$. Let
$$
\mc{F}=\{\al\in\mt: \prod_{\beta\prec\al}w(\fb)>\frac{1}{m_j^{2}}\,\text{ and }\, w(\fb)\geq \frac{1}{m_{j-1}}\,\text{ for all }\,\beta\prec\alpha\}.
$$
Then for any subset  $\mc{G}$ of $\mc{F}$ of incomparable nodes the set $\{\fa:\al\in \mc{G}\}$ is $\mc{S}_{\frac{1}{5}n_j}$-allowable and for any $\al\in\mc{F}$ we have $ord(\al)\leq m_j$.
\end{lemma}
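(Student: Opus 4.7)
The plan is to establish the two claims separately, with the allowability statement being the bulk of the work. For the depth bound, each weight satisfies $w(f_\beta)\leq 1/m_1=1/2$, so $(1/2)^{\ord(\alpha)}\geq \prod_{\beta\prec\alpha}w(f_\beta)>1/m_j^2$, which gives $\ord(\alpha)<2\log_2 m_j\leq m_j$. Pairwise disjointness of the supports of $\{f_\alpha:\alpha\in\mc{G}\}$ is automatic: at each node of the tree-analysis the successor functionals are either allowable or admissible, both requiring pairwise disjoint supports, so supports of incomparable nodes are disjoint by induction down the tree.

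The core of the proof is a bottom-up induction on the tree that controls the Schreier complexity of $S_0=\{\minsupp f_\alpha:\alpha\in\mc{G}\}$. For $\alpha\in\mt$ let $\mc{G}_\alpha$ denote $\mc{G}$ restricted to the subtree rooted at $\alpha$, $S_\alpha=\{\minsupp f_\gamma:\gamma\in\mc{G}_\alpha\}$, and set $N_\alpha=0$ if $\alpha\in\mc{G}$, and $N_\alpha=n_{i_\alpha}+\max_{\beta\in\suc(\alpha)}N_\beta$ otherwise, where $w(f_\alpha)=1/m_{i_\alpha}$. I would show $S_\alpha\in\mc{S}_{N_\alpha}$ by induction. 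The inductive step applies the standard convolution property $\mc{S}_n[\mc{S}_m]\subseteq \mc{S}_{n+m}$ of Schreier families: the successors $\beta_1,\ldots,\beta_k$ of $\alpha$ satisfy $(\minsupp f_{\beta_l})_l\in\mc{S}_{n_{i_\alpha}}$ either in the admissible or the allowable sense, identified by Lemma \ref{schreier}; the minima $(\min S_{\beta_l})_l$ form a spread of $(\minsupp f_{\beta_l})_l$ and hence also lie in $\mc{S}_{n_{i_\alpha}}$ by the spreading property; and each $S_{\beta_l}\in\mc{S}_{N_{\beta_l}}\subseteq\mc{S}_{\max_l N_{\beta_l}}$, so the convolution yields $S_\alpha\in\mc{S}_{N_\alpha}$.

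Unfolding the recursion gives $N_0=\max_{\alpha\in\mc{G}}\sum_{\beta\prec\alpha}n_{i_\beta}$. Each ancestor of $\alpha\in\mc{F}$ satisfies $w(f_\beta)\geq 1/m_{j-1}$, so $i_\beta\leq j-1$ and $n_{i_\beta}\leq n_{j-1}$. Combining this with $\ord(\alpha)<2\log_2 m_j$ and the recursion $n_j=15s_{j-1}n_{j-1}=45(\log_2 m_j)n_{j-1}$ yields
\[
N_0\leq 2(\log_2 m_j)\cdot n_{j-1}\leq \tfrac{2}{45}\,n_j<\tfrac{n_j}{5},
\]
which completes the proof. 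The main subtlety is in the convolution step, where along a single branch one interleaves allowable (even index) and admissible (odd index) successor rules; Lemma \ref{schreier}'s identification of $\mc{S}_n$ with $\mc{S}_n^M$ is precisely what allows a uniform treatment.
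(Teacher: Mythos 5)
Your proof is correct and takes essentially the same approach as the paper: bound $\ord(\al)$ by $2\log_2 m_j$ from the product of weights, use that every weight on the branch forces a Schreier index at most $n_{j-1}$, and accumulate the complexity level by level to get $\mc{S}_{2\log_2(m_j)\,n_{j-1}}$-allowability, which is contained in $\mc{S}_{\frac{1}{5}n_j}$ since $n_j=45\log_2(m_j)\,n_{j-1}$. Your explicit recursion with the convolution and spreading properties (and Lemma \ref{schreier}) is simply a more detailed write-up of the paper's level-by-level argument.
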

\begin{proof}
For every $\al\in\mc{G}$  by the assumptions we get
$$
\frac{1}{m_{j}^{2}}<\prod_{\beta\prec\al}w(\fb)\leq(\frac{1}{m_{1}})^{ord(\al)} \Rightarrow ord(\al)\leq 2\log_{m_{1}}(m_{j}).
$$
Since for all $\beta\prec\al$ , $n_{\beta}\leq  n_{j-1}$ it
follows  for every $s\leq 2\log_{m_{1}}(m_{j})$ the nodes  of $\mc{G}$
on the $s$-th level of the tree  are at most $\mc{S}_{sn_{j-1}}$-allowable. It
follows that the nodes of $\mc{G}$ are at most
$\mc{S}_{2\log_{m_{1}}(m_{j})n_{j-1}} $-allowable, thus also $\mc{S}_{\frac{1}{5}n_{j}}$-allowable.
\end{proof}
\begin{lemma}\label{1.4}\cite{adm}
For any $(n_j,m_j^{-2})$-scc $x=\sum_{i\in F}a_{i}x_{i}\in\mc{X}_{cr}$
with $\norm[x_{i}]\leq C$ for every $i\in F$ and any
$\mc{S}_{n_j-1}$-allowable family  $(f_p)_{p\in A}\subset W$ of norming
functionals we have
$$
\sum_{p\in A}f_{p}
(x)
\leq  3C.
$$
\end{lemma}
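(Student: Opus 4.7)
First I would reorganize the sum as $\sum_{p\in A}f_p(x)=\sum_{i\in F}a_i\sum_p f_p(x_i)$ and use Remark~\ref{rem-k}(c) (closure of $W$ under projections on subsets) to decompose each $f_p$. Setting $f_p^{(i)}:=(\ran x_i)f_p\in W$, the fact that the ranges $\ran x_i$ are pairwise disjoint intervals, together with $\supp x\subset \bigcup_i\ran x_i$, gives $f_p(x)=\sum_i a_i f_p^{(i)}(x_i)$, so the ``gap'' part of each $f_p$ contributes nothing.

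Next I would classify each pair $(p,i)$ with $f_p^{(i)}\neq0$ into three geometric types: \emph{interior} (where $\supp f_p\subset\ran x_i$), \emph{covered} (where $\ran x_i\subset\ran f_p$, so $f_p$ strictly spans across $x_i$), and \emph{boundary} (only a partial overlap). A routine observation shows that for each $p$ at most two pairs are of boundary type, corresponding to the extreme indices $\min B_p$ and $\max B_p$ of $B_p:=\{i:f_p^{(i)}\neq 0\}$ when $f_p$ does not fully cover them.

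For the \emph{boundary} part, each contribution is at most $\|x_i\|\le C$ and there are at most two per $p$; combining the $\mc{S}_{n_j-1}$-allowability of $(\min\supp f_p)_p$ (so the associated boundary $k_i$-values lie in a controlled $\mc{S}_{n_j-1}$-set via Lemma~\ref{schreier}) with the scc's $\mc{S}_{n_j-1}$-sparsity $\sum_{i:k_i\in G}a_i<m_j^{-2}$, the total boundary contribution is bounded by $2C$. For the \emph{interior+covered} part, for fixed $i$ the subfamily $(f_p^{(i)})_p$ is disjointly supported in $\ran x_i$ and still $\mc{S}_{n_j-1}$-allowable; combining via the $(\mc{S}_{n_{2k}},m_{2k}^{-1})$-operation of clause $(\gamma)$ with $k$ chosen minimal such that $n_{2k}\ge n_j-1$, one obtains a functional of $W$-norm at most $m_{2k}$ acting on $x_i$. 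Summing with the weights $a_i$ and using the scc's $m_j^{-2}$-sparsity on the set of $i$'s carrying the inflated bound absorbs the factor $m_{2k}$, giving at most $C$ on this part. Adding up, $\sum_p f_p(x)\le 2C+C=3C$.

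The main obstacle is the delicate combinatorial bookkeeping in both estimates: showing (i) that the boundary $k_i$-positions lie in a Schreier set that the scc can annihilate, and (ii) that the disjoint ``internal'' pieces $(f_p^{(i)})$ assemble via the closure properties of $W$ in such a way that, after weighting by $a_i$ and applying the scc, the factor $m_{2k}^{-2}$ cancels the operation-inflated $m_{2k}$. The calibration relies on the growth conditions governing $(m_j),(n_j)$, the property $\mc{S}_n=\mc{S}_n^M$ from Lemma~\ref{schreier}, and the standard inequality~\eqref{standard} satisfied by $\mc{X}_{cr}$; the latter is essential for controlling norms of ``joined'' functionals produced by the allowable operations of $(\gamma)$.
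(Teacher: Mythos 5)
First, a point of reference: the paper does not prove Lemma~\ref{1.4} at all -- it is quoted from \cite{adm} -- so your sketch can only be judged on its own merits, and as it stands it has a genuine quantitative gap. Your initial bookkeeping (restricting each $f_p$ to the intervals $\ran x_i$, and the observation that each single $f_p$ can partially overlap at most two of the ranges) is fine, but both of your estimates then break down for the same reason: in the \emph{allowable} (modified) setting only the supports of the $f_p$ are pairwise disjoint, their ranges are not. Consequently, for a \emph{fixed} block $x_i$ arbitrarily many functionals $f_p$ can straddle $\ran x_i$ (many of them may have their minimum inside $\ran x_i$ and extend beyond it, or vice versa), so ``at most two boundary pairs per $p$'' does not convert into a total boundary contribution $\le 2C$; you would need to bound, for each $i$, the straddling multiplicity or the $a_i$-mass of the multiply straddled blocks, and the vague appeal to ``boundary $k_i$-values in a controlled $\mc{S}_{n_j-1}$-set'' does not do this.

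The interior/covered part is where the real content of the lemma lies, and the mechanism you propose is quantitatively wrong on two counts. First, choosing one global $k$ with $n_{2k}\ge n_j-1$ forces $m_{2k}=m_{j+1}=m_j^5$ whenever $j$ is odd (and the paper applies the lemma precisely to scc's of odd index, e.g.\ to $\sum_i b_iy_i$ in the 1st reduction), so even on a set of $a_i$-mass $<m_j^{-2}$ the resulting bound is of order $m_j^{3}C$, not $C$. Second, and more fundamentally, the set of $i$'s ``carrying the inflated bound'' need not be sparse at all: since ranges may interleave, \emph{every} $x_i$ can be hit by several interior functionals (take each $x_i$ a scaled basic scc and the $f_p$'s built from the corresponding coordinate functionals), and $\{\minsupp x_i:\ i\in F\}$ is only an $\mc{S}_{n_j}$-set with $\sum_{i\in F}a_i=1$, so the scc sparsity cannot be invoked for it. What makes the lemma true is the trade-off your sketch never formulates: the single global $\mc{S}_{n_j-1}$-condition on $\{\minsupp f_p\}_{p\in A}$ acts as a budget, and by the convolution/spreading properties of the Schreier families (Lemma~\ref{schreier} and its relatives) the blocks $x_i$ on which a subfamily of high Schreier complexity (hence with a possibly large per-block gain, but one bounded by weights $m_{2r}$ with $2r<j$) is concentrated have their $\minsupp$'s inside an $\mc{S}_{n_j-1}$-set, hence carry mass $<m_j^{-2}$, while all remaining blocks are hit with uniformly bounded multiplicity. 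Without an argument of this type -- which is exactly the content of the proof in \cite{adm} -- the decomposition into interior, covered and boundary pairs does not yield the bound $3C$.
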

\begin{definition}  \label{dRIS} 
Fix $C>0$. A block sequence $(x_{k})_{k}$ is called a $C$-\textit{rapidly increasing sequence} ($C$-RIS) if $\norm[x_{k}]\leq  C$ for each $k$ and there exists a strictly increasing sequence $(j_k)\subset\N$ such that
  \begin{enumerate}
  \item $\maxsupp x_k\leq \frac{m_{j_{k+1}}}{m_{j_k}}$ for any $k$,
 \item $\vvert[f(x_{k})]\leq Cw(f)$ for every $f\in W$ with $w(f)>m_{j_{k}}^{-1}$ and any $k$.
\end{enumerate}
\end{definition}
By repeating the proof of \cite[Prop. 4.12]{adm} we obtain the following
\begin{lemma}\label{f3}
For any $(n_j,m_j^{-2})$-scc $x=\sum_ka_kx_k$ of a $C$-RIS $(x_k)$ defined by a sequence $(j_k)$ with $j+2<j_1$ and any norming functional $f\in W$ with weight $w(f)=m_s^{-1}$ we have 
$$
|f(x)|\leq 
\begin{cases}
\frac{14C}{m_sm_j}& \text{ if }s<j\\
\frac{8C}{m_j}& \text{ if }s=j\\
\frac{8C}{m_j^2}& \text{ if }s>j
\end{cases}
.$$
In particular $\norm[x]\leq \frac{8C}{m_j}$ and for any $\mc{S}_{n_{2s}}$-allowable family $(\fa)_{\al\in A}\subset W$  with $2s<j$ we have
\begin{equation}\label{f3a}
\sum_{\al\in A}\fa(m_j x)\leq 14C\,.
\end{equation}
\end{lemma}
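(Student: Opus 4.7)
The proof follows the scheme of \cite[Prop.~4.12]{adm}. By 1-unconditionality we treat all values as nonnegative, and fix a tree-analysis $(f_\al)_{\al\in\mt}$ of $f$, whose root has weight $m_s^{-1}$. The key tools are Lemma~\ref{admi}, which controls the structure of the ``big-weight'' part of the tree, and Lemma~\ref{1.4}, which converts $\mc{S}_{n_j-1}$-allowable families of norming functionals into bounded contributions against a scc.

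For the main case $s<j$, I apply Lemma~\ref{admi} with parameter $j$ to obtain the set $\mc{F}$ of ``big-tag'' nodes and let $\mc{G}$ be its maximal elements; by Lemma~\ref{admi}, $\mc{G}$ is $\mc{S}_{n_j/5}$-allowable, hence $\mc{S}_{n_j-1}$-allowable. Decompose $f(x)=\sum_{\al\in\mc{G}}\mathrm{tag}(\al)f_\al(x)+R$, where $R$ collects contributions from descendants of nodes in $\mc{G}$; Lemma~\ref{1.4} bounds $\sum_{\al\in\mc{G}}f_\al(x)\leq 3C$, each $\mathrm{tag}(\al)$ is controlled by $m_s^{-1}m_{j-1}^{-1}$ by maximality, and $R$ is absorbed since descendants of $\mc{G}$ have cumulative weight below $m_j^{-2}$. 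This yields the $14C/(m_sm_j)$ bound. The case $s=j$ reduces immediately: $f=\tfrac{1}{m_j}\sum_\be f_\be$ with $(f_\be)$ either $\mc{S}_{n_j}$-allowable or $\mc{S}_{n_j}$-admissible, so Lemma~\ref{1.4} gives $|f(x)|\leq 3C/m_j<8C/m_j$. For $s>j$, the RIS property takes over: if $j_k>s$ then $|f(x_k)|\leq Cw(f)=C/m_s$ by Definition~\ref{dRIS}(2); indices with $j_k\leq s$ have scc coefficients $a_k$ whose sum is at most $m_j^{-2}$ on $\mc{S}_{n_j-1}$-sets, and combined with $\|x_k\|\leq C$, the gap $j+2<j_1$, and $m_{j+1}=m_j^5$, this yields $|f(x)|\leq 8C/m_j^2$.

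The norm bound $\norm[x]\leq 8C/m_j$ follows by taking supremum over $f\in W$ of the three cases. Inequality~\eqref{f3a} is obtained by viewing the $\mc{S}_{n_{2s}}$-allowable family $(f_\al)_{\al\in A}$ as summands of a functional of effective weight $m_{2s}^{-1}$; applying the $s<j$ bound with $2s$ in place of $s$ and unpacking the scaling gives $\sum_\al f_\al(m_jx)\leq 14C$. The main obstacle is the bookkeeping in case $s<j$: carefully tracking products of weights along each branch and ensuring that Lemma~\ref{admi}, which counts only weights in the tree, transfers to our tree-analyses that mix even-index allowable branchings with odd-index special-functional branchings; since Lemma~\ref{admi}'s proof relies only on weight products and uses Lemma~\ref{schreier} to unify allowable and admissible counts, this transfer is routine.
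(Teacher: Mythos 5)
The paper does not prove this lemma directly; it invokes the proof of \cite[Prop.\ 4.12]{adm}, so what matters is whether your sketch would actually deliver the stated bounds, and in the two hard cases it would not. For $s<j$, your decomposition misplaces the source of the crucial extra factor $m_j^{-1}$. A maximal element $\al$ of the set $\mc{F}$ of Lemma \ref{admi} can perfectly well be an immediate successor of the root, in which case $tag(\al)=m_s^{-1}$, so the claim that each $tag(\al)$ is controlled by $m_s^{-1}m_{j-1}^{-1}$ is false; and a branch can leave $\mc{F}$ because a node of weight $\leq m_j^{-1}$ appears while the accumulated product is still $m_s^{-1}m_j^{-1}\gg m_j^{-2}$, so the claim that the remainder $R$ is absorbed because descendants of $\mc{G}$ carry cumulative weight below $m_j^{-2}$ is also false. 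As written, your estimate only yields a bound of order $C/m_s$. The factor $m_j^{-1}$ has to be extracted by splitting the exit nodes of each branch into those whose weight is $\leq m_j^{-1}$ (where condition (2) of Def.\ \ref{dRIS} is applied to the $x_k$'s, since $j_k\geq j_1>j+2$) and those below which the tag has dropped below $m_j^{-2}$, and by using the $(n_j,m_j^{-2})$-scc property together with the $\mc{S}_{\frac15 n_j}$-allowability from Lemma \ref{admi} to control the coefficient mass of the $x_k$'s split among these nodes; your sketch never invokes the RIS condition in this case, so the bound $14C/(m_sm_j)$ is not reached.

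Two further points. For $s=j$ the appeal to Lemma \ref{1.4} is illegitimate: the successors of the root form an $\mc{S}_{n_j}$-allowable family, while Lemma \ref{1.4} requires $\mc{S}_{n_j-1}$-allowability; the case is nevertheless immediate, since $w(f)=m_j^{-1}>m_{j_k}^{-1}$ for all $k$, so condition (2) of Def.\ \ref{dRIS} gives $|f(x)|\leq\sum_k a_k\,Cw(f)=C/m_j$. For $s>j$ your argument breaks down whenever $s\geq j_1$: the set $\{k:\ j_k\leq s\}$ is an initial segment of the indices and there is no reason for it to be an $\mc{S}_{n_j-1}$-set with respect to $(\minsupp x_k)$ --- if $s$ exceeds all $j_k$ it is everything and your estimate collapses to $|f(x)|\leq C$. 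Handling this case requires descending into the tree-analysis of $f$ (the basic-inequality-type induction of \cite{adm}), where the nodes that can split many $x_k$'s with large tag are shown to be $\mc{S}_{n_j-1}$-allowable, so the scc property contributes $m_j^{-2}$, and all remaining contributions carry weight products $\leq m_j^{-2}$. On the other hand, your deduction of \eqref{f3a} is correct: by property $(\gamma)$ the functional $m_{2s}^{-1}\sum_{\al\in A}\fa$ lies in $W$ and has weight $m_{2s}^{-1}$ with $2s<j$, so the first case applies and rescaling gives $\sum_{\al\in A}\fa(m_jx)\leq 14C$.
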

Notice that by the above Lemma a sequence of scaled vectors $(m_{j_k}x_k)$, where each $x_k$ is a $(n_{j_k}, m_{j_k}^{-2})$-scc of some $C$-RIS, satisfying (1) of Def. \ref{dRIS}, is also a 14$C$-RIS. As by Lemma \ref{f1} any block subspace of $\mc{X}_{cr}$ contains a 2-RIS of seminormalized scc's, by Lemma \ref{f3} any block subspace contains also for any $j\in\N$ a scaled $(n_{2j+1},m_{2j+1}^{-2})$-scc of 28-RIS.

By repeating the proof of  \cite[Lemma 4.10]{adm} with use of the above estimation we obtain the following.
\begin{lemma}\label{f4} 
Let $j>5$, $u=m_{2j+1}\sum_ka_kx_k$ be a scaled $(n_{2j+1}, m_{2j+1}^{-2})$-scc
of a 28-RIS $(x_k)$ defined by a sequence $(j_k)$ with $j+2<j_1$. Then any norming functional $f$ with a tree-analysis
$(\fa)_{\al\in\mt}$ such that $w(\fa)>\frac{1}{m_{2j+1}}$ for any $\al\in\mt$ satisfies 
$$
f(u)\leq \frac{1}{m_{2j}}\,.
$$
\end{lemma}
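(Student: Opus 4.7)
The plan is to imitate \cite[Lemma 4.10]{adm}, using the tree-analysis bookkeeping of Lemma \ref{admi} together with the RIS estimate \eqref{f3a} of Lemma \ref{f3}. The hypothesis $w(\fa)>1/m_{2j+1}$ for every $\al\in\mt$ means that every weight appearing in the tree is at least $1/m_{2j}$, so Lemma \ref{admi} applies with parameter $2j+1$: setting $\mc{F}=\{\al\in\mt:\, tag(\al)>m_{2j+1}^{-2}\}$, any antichain inside $\mc{F}$ is $\mc{S}_{n_{2j+1}/5}$-allowable and $\ord(\al)\leq m_{2j+1}$ for each $\al\in\mc{F}$.

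I would then introduce the ``first-drop'' maximal antichain $\mc{G}=\mc{G}_1\cup\mc{G}_2$, where $\mc{G}_1$ collects the minimal $\al\in\mt$ with $tag(\al)\leq m_{2j+1}^{-2}$, and $\mc{G}_2$ collects the leaves of $\mt$ whose entire chain of ancestors already lies in $\mc{F}$ (so $\fa=\pm e_n^{*}$ for $\al\in\mc{G}_2$). Because siblings have pairwise disjoint supports (all operations are allowable), iterated expansion yields the decomposition
\[
f(u)=\sum_{\al\in\mc{G}} tag(\al)\,\fa(u).
\]
For the $\mc{G}_2$-part one applies \eqref{f3a} parent-by-parent: the children of each parent $\al^{*}\in\mc{F}$ form an $\mc{S}_{n_{s_{\al^{*}}}}$-allowable family with $s_{\al^{*}}\leq 2j<2j+1$, so \eqref{f3a} bounds each such sub-family $\sum_{\beta\in\suc(\al^{*})}\fb(u)$ by $14\cdot 28$, and summing over parents (controlled by the depth bound $\ord(\al^{*})\leq m_{2j+1}$) is absorbed into the $m_{2j+1}^{-2}$ factor from the tag threshold. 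For the $\mc{G}_1$-part, combining $tag(\al)\leq m_{2j+1}^{-2}$ with the trivial bound $|\fa(u)|\leq 28\,m_{2j+1}$, and counting the $\mc{G}_1$-nodes via the $\mc{S}_{n_{2j+1}/5}$-allowability of their parents in $\mc{F}$ together with a further application of \eqref{f3a} at each parent, produces a contribution of the same order. Summing and invoking the fast growth $m_{2j+1}=m_{2j}^{5}$ with $j>5$ collapses all polynomial overheads and gives $f(u)\leq 1/m_{2j}$.

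The principal technical obstacle is reconciling the allowability level supplied by Lemma \ref{admi}, namely $\mc{S}_{n_{2j+1}/5}$, with the coarser level $\mc{S}_{n_{2s}}$, $2s<2j+1$, that \eqref{f3a} is stated at. The resolution, taken directly from \cite[Lemma 4.10]{adm}, is to process the tree one parent at a time: at each parent $\al^{*}\in\mc{F}$ the children automatically sit in $\mc{S}_{n_{2j}}$-allowable position (since $s_{\al^{*}}\leq 2j$), so \eqref{f3a} applies cleanly; the summation over parents then costs only a linear factor bounded by $\ord(\al^{*})\leq m_{2j+1}$, which the gap $m_{2j+1}=m_{2j}^{5}$ more than absorbs.
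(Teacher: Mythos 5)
There is a genuine gap, and it sits exactly where the work of this lemma lies. Your set-up (the threshold $\mc{F}=\{\al:\ tag(\al)>m_{2j+1}^{-2}\}$, the first-drop antichain $\mc{G}_1\cup\mc{G}_2$, Lemma \ref{admi} applied with parameter $2j+1$) is the right frame, and the $\mc{G}_1$-part can indeed be closed in the spirit of the 1st reduction of the quasi-minimality proof: the nodes of $\mc{G}_1$ are $\mc{S}_{n_{2j+1}-1}$-allowable, so one application of Lemma \ref{1.4} (not a parent-by-parent application of \eqref{f3a}, and not the trivial bound times a count -- the cardinality of that family is not controlled) gives a contribution of order $m_{2j+1}^{-2}\cdot m_{2j+1}\cdot 3\cdot 28$, which is harmless. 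But your treatment of the $\mc{G}_2$-part does not work. First, those nodes are precisely the ones whose tags stay \emph{above} the threshold, so there is no $m_{2j+1}^{-2}$ factor available to absorb anything. Second, ``summing over parents controlled by the depth bound $\ord(\al^{*})\leq m_{2j+1}$'' confuses chain length with antichain size: Lemma \ref{admi} bounds the length of branches, while the collection of parents $\al^{*}\in\mc{F}$ is (essentially) an antichain whose cardinality is only bounded by Schreier-type admissibility, i.e.\ not at all in the relevant sense; moreover the weighted sums $\sum_{\al^{*}}tag(\al^{*})w(f_{\al^{*}})$ over such antichains need not be $O(1)$. Third, each application of \eqref{f3a} only yields the constant $14\cdot 28$, so without a genuinely small multiplier no accumulation of such estimates can produce the bound $1/m_{2j}$; note that \eqref{f3a} applied at the root alone already gives a constant, so the whole content of the lemma is that avoiding the weight $m_{2j+1}^{-1}$ throughout the tree forces the value down by a factor $m_{2j}^{-1}$, and that cannot follow from \eqref{f3a} bookkeeping alone.

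What is missing is the actual mechanism of \cite[Lemma 4.10]{adm}, which is what the paper invokes (the paper gives no independent proof; it says the statement follows by repeating that proof with the estimates of Lemma \ref{f3}). In the high-tag region one must play the weights $w(\fa)\geq m_{2j}^{-1}$ against the RIS parameters $(j_k)$ -- splitting, for each $k$, the nodes according to whether $w(\fa)$ is larger or smaller than $m_{j_k}^{-1}$ and using condition (2) of Def.\ \ref{dRIS} (this is where the hypothesis $j+2<j_1$, which you never use directly, enters) -- and against the smallness of the scc coefficients $(a_k)$ on Schreier sets of complexity $n_{2j+1}-1$, which controls the part of the basic scc seen by families of admissibility strictly below $n_{2j+1}$. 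Your sketch never touches either ingredient, so the central estimate for $\mc{G}_2$ is unproved and, as written, the claimed absorption argument is false.
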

\section{Properties of the space $\mc{X}_{cr}$}
In this section we study the minimality properties of $\mc{X}_{cr}$. We deal first with tightness with constants as it follows immediately by \cite{fr1}.
\begin{definition}\cite{fr1}
 A Banach space $X$ with a basis $(e_n)$ is called \textit{tight with constants}, if for any infinite dimensional subspace $Y$ of $X$ there is a sequence of successive intervals $I_1<I_2<\dots$ such for any $K\in\N$ the subspace $Y$ does not embed with constant $K$ into $[e_i:\ i\not\in I_K]$. 
\end{definition}
Recall that a Banach space with a basis is  \textit{$\ell_1$-strongly asymptotic} if any $\mc{S}_1$-allowable sequence of normalized vectors $(x_1,\dots,x_n)$ is $C$-equivalent to the u.v.b. of $\ell_1^n$, for any $n\in\N$ and some universal $C\geq 1$. By $(\gamma)$ in the definition of its norming set and Remark \ref{rem-k}, the space $\mc{X}_{cr}$ is $\ell_1$-strongly asymptotic. Since $\mc{X}_{cr}$ is also reflexive, by \cite[Prop. 4.2]{fr1} we obtain the following.
\begin{theorem}
The space $\mc{X}_{cr}$ is tight with constants.  
\end{theorem}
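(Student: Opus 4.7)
The plan is to invoke \cite[Prop.~4.2]{fr1} directly: that result says that any reflexive Banach space with an unconditional basis which is $\ell_1$-strongly asymptotic is automatically tight with constants. Since reflexivity of $\mc{X}_{cr}$ is already recorded in Remark~\ref{rem-k}(e) and $1$-unconditionality of the canonical basis in Remark~\ref{rem-k}(a),(c), the only genuine item on my to-do list is the $\ell_1$-strong asymptoticity of $\mc{X}_{cr}$.

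To obtain this, I would fix an $\mc{S}_1$-allowable normalized block sequence $x_1,\dots,x_n$ together with arbitrary scalars $a_1,\dots,a_n$. The upper estimate $\norm[\sum_i a_i x_i]\leq \sum_i|a_i|$ is immediate from the triangle inequality and normalization. For the matching lower estimate, for each $i$ I would pick $g_i\in W$ with $g_i(x_i)\geq 1/2$, replace $g_i$ by its restriction $(\ran x_i)\, g_i$ (still in $W$ by Remark~\ref{rem-k}(c)), and then multiply by $\sgn(a_i)$ (still in $W$ by $(\beta)$). The resulting functionals have pairwise disjoint supports whose minima form an $\mc{S}_1$-set, hence a fortiori an $\mc{S}_{n_2}$-set, so the family is $\mc{S}_{n_2}$-allowable. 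Clause $(\gamma)$ with $j=1$ then places $\phi:=m_2^{-1}\sum_i \sgn(a_i)\,g_i$ in $W$, and evaluation gives
$$
\phi\Bigl(\sum_i a_i x_i\Bigr) \;\geq\; \frac{1}{2m_2}\sum_i|a_i|,
$$
so $(x_i)_{i=1}^n$ is $2m_2$-equivalent to the unit vector basis of $\ell_1^n$, with a constant independent of $n$.

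There is essentially no obstacle in this verification: clauses $(\beta)$ and $(\gamma)$, together with the interval-projection closure of $W$ from Remark~\ref{rem-k}(c), formally produce the averaging functional that witnesses the $\ell_1$ lower estimate. The real analytic content is packed inside \cite[Prop.~4.2]{fr1}, whose proof leverages $\ell_1$-strong asymptoticity and reflexivity to extract, in any infinite-dimensional subspace, weakly null sequences that force the intervals $I_n$ witnessing tightness at arbitrary constants $K$.
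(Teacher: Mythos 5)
Your proposal is correct and follows essentially the same route as the paper: the paper also deduces tightness with constants from reflexivity plus $\ell_1$-strong asymptoticity via \cite[Prop.~4.2]{fr1}, the latter being exactly the consequence of clause $(\gamma)$ and the sign/projection properties of $W$ recorded in Remark~\ref{rem-k} that you verify explicitly. The only nitpick is that for allowable (disjointly supported, not necessarily successive) sequences you should restrict $g_i$ to $\supp x_i$ rather than $\ran x_i$, which is permitted by Remark~\ref{rem-k}(c) and keeps the supports of the $g_i$ pairwise disjoint.
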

We pass now to the proof of quasi-minimality, adapting it suitably we shall show also tightness by range. Recall that a Banach space is \textit{quasi-minimal}, if any two infinitely dimensional subspaces have further two infinitely dimensional subspaces which are isomorphic. 
\begin{theorem}
 The space $\mc{X}_{cr}$ is quasi-minimal.
\end{theorem}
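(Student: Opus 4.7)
The plan is to construct, inside any two block subspaces $Y, Z$ of $\mc{X}_{cr}$, equivalent block sequences $(u_n) \subset Y$ and $(v_n) \subset Z$ by building parallel rapidly increasing sequences and matched scaled special convex combinations, and then transferring norming functionals between the two families via the \emph{modified} (allowable) structure of the even operations in $W$.

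First, by alternately applying Lemma~\ref{f1} in $Y$ and $Z$, I construct seminormalized $2$-RIS's $(y_k) \subset Y$ and $(z_k) \subset Z$ that are interleaved, $y_1 < z_1 < y_2 < z_2 < \cdots$, with a common rapidly increasing sequence of weights $(j_k)$. By a small adjustment controlling the gap between successive blocks, I arrange that any basic $(n,\e)$-scc coefficients on $(\minsupp y_k)$ remain basic $(n, 2\e)$-scc coefficients on $(\minsupp z_k)$. Then for a rapidly increasing sequence of odd indices $(2\ell_n+1)$ I form, via Lemma~\ref{f1}, scaled seminormalized scc's $u_n = m_{2\ell_n+1}\sum_{k \in F_n} a_k^{(n)} y_k$ in $Y$ and matched combinations $v_n = m_{2\ell_n+1}\sum_{k \in F_n} a_k^{(n)} z_k$ in $Z$; by the choice of Step~1, the latter is, up to absolute constants, a scaled $(n_{2\ell_n+1}, m_{2\ell_n+1}^{-2})$-scc of a $28$-RIS as well, and by the remark after Lemma~\ref{f3} both $(u_n)$ and $(v_n)$ are themselves $28$-RIS's.

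The heart of the proof is the equivalence of $(u_n)$ and $(v_n)$. To show $\|\sum_n c_n v_n\| \leq C \|\sum_n c_n u_n\|$ (the reverse being symmetric), I take a norming functional $f \in W$ of $\sum_n c_n v_n$ and its tree-analysis $(f_\alpha)_{\alpha \in \mt}$. For each $v_n$, any branch starting at the root and consisting solely of weights larger than $m_{2\ell_n+1}^{-1}$ contributes at most $m_{2\ell_n}^{-1}$ by Lemma~\ref{f4}, so the bulk of $|f(\sum_n c_n v_n)|$ is concentrated at nodes $\alpha$ with $w(f_\alpha)\leq m_{2\ell_1}^{-1}$. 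Any such node of even weight $w(f_\alpha) = m_{2j}^{-1}$ has the form $f_\alpha = m_{2j}^{-1}\sum_i f_{\alpha,i}$ with pairwise disjoint supports by allowability; I replace each $f_{\alpha,i}$ roughly supported on some $\ran z_k$ by a functional $f'_{\alpha,i} \in W$ with $f'_{\alpha,i}(y_k) = f_{\alpha,i}(z_k)$ and $\supp f'_{\alpha,i} \subseteq \ran y_k$. Since the $(y_k)$ have pairwise disjoint ranges, $(f'_{\alpha,i})_i$ is again allowable, so the modified node $g_\alpha = m_{2j}^{-1}\sum_i f'_{\alpha,i}$ lies in $W$. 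Odd-weight (special) nodes cannot be transferred directly because of the rigidity of the coding $\sigma$, but Lemma~\ref{f4} renders their total contribution negligible on the scaled scc's. Assembling the pieces yields a functional $g \in W$ with $|g(\sum_n c_n u_n)|$ comparable to $|f(\sum_n c_n v_n)|$.

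The main obstacle is this bookkeeping in the third step: managing the interplay between the allowable even operations (which are flexible and permit the transfer) and the admissible-only special operations (which, controlled by $\sigma$, cannot be replicated between $Y$ and $Z$ and must be absorbed through the bounds of Lemma~\ref{f4}), while guaranteeing that the assembled functional $g$ still lies in~$W$. It is precisely the \emph{modified} nature of the even operations, together with the smallness-estimates for scaled scc's with respect to high-weight and special functionals, that allows quasi-minimality to survive the presence of the coding function used to force tightness by range.
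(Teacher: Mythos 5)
There is a genuine gap, and it sits exactly at the central difficulty of the theorem: your treatment of the odd-weight (special) nodes. You claim that ``Lemma~\ref{f4} renders their total contribution negligible on the scaled scc's,'' but Lemma~\ref{f4} only bounds functionals \emph{all} of whose tree-weights are larger than $m_{2j+1}^{-1}$; it says nothing about nodes of weight $m_{2j+1}^{-1}$ or smaller. In fact the vectors $u_n$ and $v_n$ (scaled $(n_{2j_n+1},m_{2j_n+1}^{-2})$-scc's of RIS) are seminormalized precisely because a special functional of weight $m_{2j_n+1}^{-1}$ built on a dependent sequence acts on them with value of order $1$; every other part of a norming functional contributes only $O(m_{2j_n}^{-1})$. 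So the functional $f$ norming $\sum_n c_n v_n$ essentially \emph{must} pass through special nodes, and discarding them leaves you with nothing to transfer. The paper's proof confronts this head on: it chooses (after perturbation) $\ran \hat y_k=\ran\hat z_k$, builds the two families so that $(y_i^*)_{i\in I_n}$ and $(z_i^*)_{i\in I_n}$ are $(2j_n+1)$-dependent sequences defined by the \emph{same} interval collection $(\ran y_{i,k})$ -- exploiting that the coding $\sigma$ depends only on covering intervals, not on the functionals themselves -- and then performs four successive reductions on the tree-analysis of $f$ so that every surviving special node is defined by exactly these intervals and respects the blocks $\ran y_{i,k}$. Only then can the leaf-level functionals be replaced by $z_{i,k}^*$ without disturbing the interval sequences that legitimize the special nodes, so that the modified functional stays in $W$.

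Two further points in your scheme would also fail as written. First, the interleaved choice $y_1<z_1<y_2<z_2<\cdots$ means that replacing a functional supported in $\ran z_k$ by one supported in $\ran y_k$ moves minima of supports to the left; Schreier families are only closed under spreading to the right, so $\mc{S}_{n_j}$-allowability of the modified family is not automatic (the paper avoids this by making the ranges equal, not interleaved). Second, even for the even-weight nodes, your replacement changes supports of functionals that sit \emph{below} special nodes in the tree, and condition (3) of Definition~\ref{special} then requires the new supports to be covered by the old defining intervals $E_j$; without something like the paper's 4th reduction (which forces each $\ran y_{i,k}$ to be either disjoint from or contained in every defining interval it meets), you cannot conclude that the assembled functional $g$ belongs to $W$. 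So the overall strategy (parallel RIS's, matched scc's, functional transfer) is the right general shape, but the proposal is missing the mechanism -- matched dependent sequences defined by common intervals together with the reduction steps -- that makes the transfer of the indispensable special functionals possible.
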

\begin{proof}
Given two block subspaces $Y,Z$ of $X$ we pick a block RIS $(u_n)$ and $(v_n)$ satisfying the following. 
\bnum
\item[(A)] $u_n=m_{2j_n+1}\sum_{i\in I_n}b_iy_i$ is a scaled $(n_{2j_n+1}, m_{2j_n+1}^{-2})$-scc of 28-RIS $(y_i)$,\\
 $v_n=m_{2j_n+1}\sum_{i\in I_n}b_iz_i$ is a scaled $(n_{2j_n+1}, m_{2j_n+1}^{-2})$-scc of 28-RIS $(z_i)$,\\ 
$\minsupp u_n,\minsupp v_n> m_{2j_n+1}$ for all $n\in\N$, \\
$\sum_nm_{2j_n}^{-1}<100^{-1}$, 
\item[(B)] $y_i=m_{2j_i}\sum_{k\in K_i}b_{i,k}y_{i,k}$ is a scaled $(n_{2j_i},m_{2j_i}^{-2})$-scc of 2-RIS $(y_{i,k})_{k\in K_i}$,\\ 
$z_i=m_{2j_i}\sum_{k\in K_i}b_{i,k}z_{i,k}$ is a scaled $(n_{2j_i},m_{2j_i}^{-2})$-scc of 2-RIS $(z_{i,k})_{k\in K_i}$,\\  
$2j_i>2j_n+3$, $y_i^*=\frac{1}{m_{2j_i}}\sum_{k\in K_i}y_{i,k}^*$,  $z_i^*=\frac{1}{m_{2j_i}}\sum_{k\in K_i}z_{i,k}^*$, for each $i\in I_n$, $n\in\N$,
\item[(C)] $y_{i,k}$ is a $(n_{2j_{i,k}}, m_{2j_{i,k}}^{-2})$-scc with $\norm[y_{i,k}]\geq 1/2$ supported on some block sequence equivalent to a block sequence in $Y$, \\
$z_{i,k}$ is a $(n_{2j_{i,k}}, m_{2j_{i,k}}^{-2})$-scc with
$\norm[z_{i,k}]\geq 1/2$ supported on some block sequence equivalent
to a block sequence in $Z$,\\
 $y_{i,k}^*(y_{i,k})=1=z_{i,k}^*(z_{i,k})$, $\ran y_{i,k}^*=\ran
 y_{i,k}=\ran z_{i,k}^*=\ran z_{i,k}$ 
for each $k\in K_i$, $i\in I_n$, $n\in\N$,
\item[(D)] $(y_i^*)_{i\in I_n}$, $(z_i^*)_{i\in I_n}$ are $(2j_n+1)$-dependent
sequences, defined by the collection of intervals $(\ran y_{i,k})_{k\in K_i,i\in
I_n}$, $n\in\N$ (in the notation of Def. \ref{special} for each $k\in K_i$,
$i\in I_n$ we take as the collection $(E_j)_{j\in A_k}$ just one interval $\ran
y_{i,k}$).
\enum
The construction is straightforward - using Lemma \ref{f1} we construct first two infinite 2-RIS $(\hat{y}_k)_{k\in\N}\subset Y$ and $(\hat{z}_k)_{k\in\N}\subset Z$ of scc with norm at least 1/2 with a common sequence $(j_k)_k$. Allowing small perturbation we can also assume that $\ran \hat{y}_k=\ran\hat{z}_k$. Then we define $(y_i)$ and $(z_i)$ as 28-RIS of scaled scc's of $(\hat{y}_k)_k$ and $(\hat{z}_k)_k$ respectively, with the same coefficients with respect to $(y_{i,k})_{k\in K_i}\subset (\hat{y}_k)_k$ and $(z_{i,k})_{k\in K_i}\subset (\hat{z}_k)_k$. We repeat the procedure, building sequences of scaled scc's $(u_n)_n$ and $(v_n)_n$ on $(y_i)_i$ and $(z_i)_i$ respectively.

Note that by the definition of the coding function \eqref{eq:2}, using that $\max E_{i,\max K_{i}}=\maxsupp y_{i}$ we get
for each $i\in I_n, n\in\N$,
\begin{equation}
n_{2j_{i+1}}>\rho(\maxsupp y_i)+\maxsupp y_i>\rho(\maxsupp y_i)+3n_{2j_{n}+1}\,\,\,\tag{E}.
\end{equation}
We claim that the sequences $(u_n)$ and $(v_n)$ are equivalent. Take any non-negative scalars $(a_n)$ with $\norm[\sum_na_nu_n]=1$, let $u=\sum_na_nu_n$ and take a norming functional $f$ with a tree-analysis $(f_\al)_{\al\in\mt}$ and such that $f(u)=1$. Since the norming set $W$ is invariant under changing signs of coefficients by the condition ($\beta$) in the definition of the norming set $W$, we can assume that all coefficients of vectors $(u_n)$, $(v_n)$ and functional $f$ are non-negative.

By modifying the tree-analysis of $f$ we shall construct a tree-analysis of some norming functional $g$ such that $g(\sum_na_nz_n)\geq 1/6$. We shall make first some reductions, erasing nodes of $\mt$ with some controllable error. After the reduction we define a suitable replacements of certain nodes $f_\al$, $\al\in\mt$, in order to define $g$. 

First we introduce some notation. For any collection $\mc{E}$ of nodes of $\mt$ we shall write $\supp \mc{E}=\cup_{\al\in\mc{E}}\supp f_\al$. We shall prove several reductions, enabling us to restrict the tree-analysis of $f$ to the nodes convenient for the replacement procedure.

\

\textbf{1st reduction}. For any $n\in\N$ let 
\begin{align*}
\mc{P}_n=\{\al\in\mt: \ &\supp f_\al\cap\ran u_n\neq\emptyset\text{ and}\\
&\text{$\al\in\mt$  is minimal  with }w(f_\al)\leq m_{2j_n+1}^{-1}\}
\end{align*}
With error $2m_{2j_n}^{-1}$ we can assume that $(f_\al|_{\ran u_n})_{\al\in
\mc{P}_n}$ is $\mc{S}_{\frac{1}{5}n_{2j_n+1}}$-allowable and $\supp u_n\subset
\supp \mc{P}_n$ and for any $\al\in \mc{P}_n$ we have $ord(\al)\leq
m_{2j_n+1}$. 

\textit{Proof}. Notice first that by Lemma \ref{f4}  we have that 
$$
(f-f|_{\supp\mc{P}_n})(u_n)\leq \frac{1}{m_{2j_{n}}},
$$
hence with error $m_{2j_n}^{-1}$ we can assume that $\supp u_n\subset\supp
\mc{P}_n$. Now let
$$
\mc{P}_{n,1}=\{\al\in \mc{P}_n: \prod_{\beta\prec\alpha}w(\fb)\leq m_{2j_n+1}^{-2}\}.
$$
For every $\al\in \mc{P}_{n,1}$ choose $\be_{\al}\prec\al$ such that
\begin{equation*}
  \label{eq:6}
 \prod_{\gamma\prec\beta_{\al}}w(\fg)>m_{2j_n+1}^{-2}\,\,\textrm{and}\,\,
  \prod_{\gamma\preceq\be_{\al}}w(\fg)\leq m_{2j_n+1}^{-2}\,.
\end{equation*}
It follows that
\begin{equation}
  \label{eq:7}
  \frac{1}{m_{2j_n+1}^{2}}<\prod_{\gamma\prec\be_{\al}}w(\fg)\leq
w(f_{\be_{\al}})^{-1}\prod_{\gamma\preceq\be_{\al}}w(\fg)\leq \frac{m_{2j_n}}{m_{2j_n+1}^{2}}\,.
\end{equation}
Note that if $\al,\al_{1}\in \mc{P}_{n,1}$ the nodes  $\be_{\al},\be_{\al_{1}}$ are either incomparable or equal. Set
\begin{align*}
  \mc{R}_{n}=\{\beta_{\al}:\ \al\in \mc{P}_{n,1}\}.
\end{align*}
By Lemma \ref{admi} we get that
\begin{equation}
  \label{eq:33}
  \{f_{\be}:\beta\in \mc{R}_{n}\}\,\textrm{ is }\mc{S}_{n_{2j_n+1}-1 }\text{-allowable}.
\end{equation}
Consequently, using \eqref{eq:7}, \eqref{eq:33} and Lemma~\ref{1.4} for the scc $\sum_ib_iy_i$, we obtain 
\begin{align*}
  \label{eq:8}
 f|_{\supp \mc{P}_{n,1}}(m_{2j_n+1}\sum_{i\in I_n}b_{i}y_{i})
  &  \leq  m_{2j_n+1}\sum_{\beta\in \mc{R}_n} \left(\prod_{\gamma\prec\be_{\al}}w(\fg)\right)f_{\be}(\sum_{i\in I_n}b_{i}y_{i}) \\
  &  \leq  \frac{m_{2j_n}}{m_{2j_n+1}}  \sum_{\beta\in \mc{R}_n}f_{\be}(\sum_{i\in I_n}b_{i}y_{i})\\
 &  \leq 3\cdot 28\frac{1}{m_{2j_n}^2}\leq\frac{1}{m_{2j_{n}}}\,. 
\end{align*}
Now Lemma ~\ref{admi} applied to the family $\{\fa:\al\in \mc{P}_n\setminus\mc{P}_{n,1}\}$ finishes the proof of the 1st reduction.

\

\textbf{2nd reduction}. For any $n\in\N$ with error $m_{2j_n}^{-1}$ we can
assume that $w(f_\al)=m_{2j_n+1}^{-1}$ for any $\al\in \mc{P}_{n}$.

\textit{Proof}. Set $\mc{P}_{n,2}=\{\al\in \mc{P}_{n}: \ w(f_\al)< m_{2j_n+1}^{-1}\}$. For any $\al\in \mc{P}_{n,2}$ pick $i_\al\in I_n$ with $m_{2j_{i_\al}}^{-1}\geq w(f_\al)>m_{2j_{i_\al+1}}^{-1}$. If $w(f_\al)\leq m_{2j_i}^{-1}$ for any $i\in I_n$ let $i_\al=\max I_n$, if $w(f_\al)> m_{2j_i}^{-1}$ for all $i\in I_n$, put $i_\al=0$.

Split $f|_{\supp \mc{P}_{n,2}}(u_n)$ in the following way
\begin{align*}
f|_{\supp \mc{P}_{n,2}}(u_n)&\leq m_{2j_n+1}\sum_{i\in I_n}b_i\sum_{\al\in\mc{P}_{n,2}:i_\al>i}f_\al(y_i)\\
&+m_{2j_n+1}\sum_{i\in I_n}b_i\sum_{\al\in\mc{P}_{n,2}:i-2\leq i_\al\leq i} tag(\fa)f_\al(y_i)\\
&+m_{2j_n+1}\sum_{i\in I_n}b_i\sum_{\al\in\mc{P}_{n,2}:i_\al< i-2}f_\al(y_i)
\end{align*}
Fix $i\in I_n$ and compute, using for the last estimate the condition (1) of Def. \ref{dRIS}
\begin{align*}
\sum_{\al\in\mc{P}_n:i_\al>i}f_\al(y_i)&\leq \sum_{\al\in\mc{P}_n:i_\al>i}w(f_\al)\sum_{\gamma\succ\al}f_\gamma (y_i)\\
& \leq \sum_{\al\in\mc{P}_n:i_\al>i}\frac{1}{m_{2j_{i_\al}}}\sum_{\gamma\succ\al}f_\gamma (y_i)\\
& \leq \sum_{\al\in\mc{P}_n:i_\al>i}\frac{1}{m_{2j_{i+1}}}\sum_{\gamma\succ\al}f_\gamma (y_i)\\
&\leq \frac{1}{m_{2j_{i+1}}}28\maxsupp y_i\\
& \leq \frac{28}{m_{2j_{i}}}\,.
\end{align*}
For the second part notice that for each $\al\in\mc{P}_n$ there are at most 3 $i$'s in $I_n$ with $i_\al\in \{i-2,i-1,i\}$. Denote the set of all such $i$'s by $J_n$. As by the 1st reduction $(f_\al|_{\ran u_n})_{\al\in \mc{P}_n}$ is $\mc{S}_{\frac{1}{5}n_{2j_n+1}}$, using that $u_n$ is a scaled $(n_{2j_n+1}, m_{2j_n+1}^{-2})$-scc we obtain
\begin{align*}
 m_{2j_n+1}\sum_{i\in I_n}b_i\sum_{\al\in\mc{P}_{n,2}:i-2\leq i_\al\leq i} tag(\fa)f_\al(y_i)\leq \norm[m_{2j_n+1}\sum_{i\in\mc{J}_n}b_iy_i]\leq \frac{6\cdot 28}{m_{2j_n+1}}
\end{align*}
For the third part notice that if $i_\al\leq i-3$, then $w(\fa)\geq m_{2j_{i-2}}^{-1}$. Fix again $i\in I_n$, and estimate by definition of $\mc{P}_{n,2}$
\begin{align*}
\sum_{\al\in\mc{P}_{n,2}:i_\al< i-2}f_\al(y_i)&\leq \frac{1}{m_{2j_n+2}}\sum_{E\in \mc{H}_i}\norm[E y_i]\,,
\end{align*}
where $\mc{H}_i=\{\supp f_\gamma\cap \supp y_i: \ \gamma\in \suc (\al), \
i-2>i_\al\}$.  Notice that each $\mc{H}_i$ is
$\mc{S}_{n_{2j_n+1}+n_{2j_{i-2}}}$-allowable, thus also
$\mc{S}_{n_{2j_{i}-2}}$-allowable, 
hence by Lemma \ref{f3},\eqref{f3a} $\sum_{E\in\mc{H}_i}\norm[Ey_i]\leq 14\cdot 28$ for each
$i\in I_n$. 
Therefore putting together the above estimates we obtain
\begin{align*}
f|_{\supp \mc{P}_{n,2}}(u_n)&\leq m_{2j_n+1}\sum_{i\in
I_n}b_i\frac{28}{m_{2j_i}}+\frac{6\cdot 28}{m_{2j_n+1}}+m_{2j_n+1}\sum_{i\in
I_n}b_i\frac{14\cdot 28}{m_{2j_n+2}}\leq\frac{1}{m_{2j_{n}}}
\end{align*}
as  $\sum_ib_i=1$. 
Thus erasing the set  $\mc{P}_{n,2}$ we finish the proof
of the 2nd reduction.

\

\textbf{3rd reduction}. For any $n\in\N$ with error $m_{2j_n}^{-1}$ we can
assume that for any $\al\in \mc{P}_{n}$ the special functional $f_\al|_{\ran
u_n}$ is defined by the sequence $(\ran y_{i,k})_{k\in K_i,i\in I_n}$, 
in particular $f_\al|_{\ran u_n}=\frac{1}{m_{2j_n+1}}\sum_{\supp f_\al\cap \supp
y_i\neq\emptyset}f_i^\al$, with  $f_i^\al=\frac{1}{m_{2j_i}}\sum_{k\in
K_i}f^\al_{i,k}$ and $\supp f_{i,k}^\al\subset\ran y_{i,k}$ for each $k\in K_i$,
$i\in I_n$.

\textit{Proof}. Recall that by definition of a dependent sequence 
for any $\beta\in\suc(\mc{P}_n)$ we have $w(\fb)=m_{2s}^{-1}$ for some $s\in\N$.
Fix $i\in I_n$ and let
\begin{align*}
\mc{P}_{n,i}&=\{\beta\in \suc(\mc{P}_n):\ w(\fb)< m_{2j_{i}}^{-1}\},\\
 \mc{Q}_{n,i}&=\{\beta\in \suc(\mc{P}_n):\ w(\fb)> m_{2j_{i}-2}^{-1}\}.
\end{align*}
Notice that by 1st and 2nd reductions the family $(\fb)_{\beta\in\mc{P}_{n,i}}$
is $\mc{S}_{\frac{6}{5}n_{2j_n+1}}$-allowable, thus also
$\mc{S}_{n_{2j_i}-1}$-allowable. Moreover $y_i$ is a scaled
$(n_{2j_i},m_{2j_i}^{-2})$-scc. 
Therefore we can repeat the reasoning from the 2nd reduction obtaining
$f|_{\supp\mc{P}_{n,i}}(y_i)\leq 2m_{2j_{i}-1}^{-1}$.

By the 1st and 2nd reductions the family $\{f_\gamma:\ \gamma\in\suc(\beta):\beta\in\mc{Q}_{n,i}\}$ is $\mc{S}_{\frac{6}{5}n_{2j_n+1}+n_{2j_i-4}}$-allowable, thus $\mc{S}_{n_{2j_i-2}}$-allowable. Thus by Lemma \ref{f3}\eqref{f3a}, we have
\begin{align*}
 f|_{\supp\mc{Q}_{n,i}}(y_i)&\leq \frac{1}{m_{2j_n+1}}\sum_{\beta\in \mc{Q}_{n,i}}w(f_\beta)\sum_{\gamma\in\suc(\beta)}f_\gamma(y_i)\leq \frac{14\cdot 28}{m_{2j_n+1}^{3}},
\end{align*}
as any $f_\beta$ is an immediate descendant of some special functional $f_\al$, $\al\in\mc{P}_n$, with $w(f_\al)=m_{2j_n+1}^{-1}$ by the 2nd reduction. 

Let $\tilde{\mc{P}}_{n,3}=\cup_i(\mc{P}_{n,i}\cap\supp y_i)$ and $\tilde{\mc{Q}}_{n,3}=\cup_i(\mc{Q}_{n,i}\cap\supp y_i)$. Then by the above
\begin{align*}
 f_{\tilde{\mc{P}}_{n,3}\cup\tilde{\mc{Q}}_{n,3}}(u_n)&
 \leq m_{2j_n+1}\sum_{i\in I_n}\frac{2b_i}{m_{2j_{i}-1}}+14\cdot
28m_{2j_n+1}\sum_{i\in I_n}\frac{b_i}{m_{2j_n+1}^{3}}\leq \frac{1}{m_{2j_{n}+1}}.
\end{align*}
Now notice that for any $\al\in\mc{P}_n$ there is at most one $i\in I_n$ with $w(\fb)=m_{2j_i-2}^{-1}$ for some $\beta\in\suc(\al)$. Denote the set of such $i$'s by $K_n$.  Therefore, by the 1st reduction, as $u_n$ is a scaled $(n_{2j_n+1}, m_{2j_n+1}^{-1})$-scc, 
$$
m_{2j_n+1}\sum_{i\in I_n}b_i\sum_{\beta\in\suc(\mc{P}_{n}): w(\fb)=m_{2j_i-2}^{-1}}\fb(y_i)\leq m_{2j_n+1}\norm[\sum_{i\in K_n}b_iy_i]\leq \frac{2\cdot 28}{m_{2j_n+1}}.
$$
Summing up we obtain that with error  $57m_{2j_{n}+1}^{-1}$
we can assume that for any $n\in\N$ and $\al\in
\mc{P}_{n}$ we have $w(f_\beta)=m_{2j_i}^{-1}$ for any $\beta\in\suc \al$ with
$\supp f_\beta\cap\supp y_i\neq\emptyset$. In particular it follows that $\supp
f_\beta$, $\beta\in\suc(\mc{P}_n)$,  intersects at most one of $\supp y_i$'s. 

By (2) in Def. \ref{special} and (D) it follows that $f|_{[1,\dots,\maxsupp u_n]}$ is a special functional defined by the intervals $(\ran y_{i,k})_{k\in K_i,i\in I_n}$. In order to obtain that suitable $f_{i,k}^\al$ satisfy $\supp f_{i,k}^\al\subset\ran y_{i,k}$ we shall make one more correction. 

For any $\al\in \mc{P}_{n}$ let $i_\al=\max\{i\in I_n:\ \supp f_\al\cap \supp y_i\neq\emptyset\}$. Put $F_n=\{i_\al:\ \al\in \mc{P}_{n}\}$. Notice that, as $u_n$ is a scaled $(n_{2j_n+1},m_{2j_n+1}^{-2})$-scc and $(f_\al|_{\ran u_n})_{\al\in \mc{P}_{n}}$ is $\mc{S}_{n_{2j_n+1}-1}$-allowable by the 1st reduction, we obtain 
$$
f(m_{2j_n+1}\sum_{i\in F_n}b_iy_i)\leq \norm[m_{2j_n+1}\sum_{i\in F_n}b_iy_i]\leq\frac{2}{m_{2j_n+1}}.
$$
In case $w(f_{i}^{\al})=m_{2j_{i}}^{-1}$ for some $i>\min I_{n}$ it follows
that  $\ran(f_{\min I_{n},k})\subset \ran(y_{\min I_{n},k})$ for
 every $k.$  Otherwise deleting part of $y_{\min I_{n}}$, with error $m_{2j_{n}+1}^{-1}$ we may assume
also that $\ran(f_{\min I_{n},k})\subset \ran(y_{\min I_{n},k})$. 
Therefore, after erasing $m_{2j_n+1}\sum_{i\in F_n}b_iy_i$ with error $2m_{2j_n+1}^{-1}$, by (2) and (3) of Def. \ref{special} we finish the proof of the 3rd reduction. 

\

\textbf{4th reduction}. Given $n\in\N$ let 
$$
\mc{D}_n=\{\xi\in \mt: \xi\prec\al \text{ for some }\al\in\mc{P}_n \text{ and }f_\xi \text{ is a special functional}\}.
$$ 
For $\xi\in \mc{D}_{n}$ let  $(E_j^\xi)_{j\in A_l,l\in G_s,s\in F}$ be the sequence of the intervals that determines $f_{\xi}$ (see  Def. \ref{special}).  
With error $m_{2j_n}^{-1}$ we can assume that for any $i\in I_n$, $k\in K_i$ and any $\xi\in\mc{D}_n$ with $f_\xi(y_{i,k})\neq 0$ there are $j\in A_l$, $l\in G_s$, $s\in F$ such that $\ran y_{i,k}\subset E_j^\xi$.

\textit{Proof}.
For any $\xi\in\mc{D}_n$ let $i_\xi\in I_n$ be minimal with $\supp f_\xi\cap\ran y_{i_\xi}\neq\emptyset$.

First notice that the family $(y_{i_\xi})_{\xi\in\mc{D}_n}$ is $\mc{S}_{n_{2j_n+1}-1}$-admissible. Indeed, for any $\xi\in\mc{D}_n$ we have $\supp f_{\xi}\cap\ran u_n=\cup\{\supp \fa: \xi\prec\al\in \mc{P}_n\}\cap \ran u_n$ by the 1st reduction, thus  
\begin{equation}\label{admis-1}
(\minsupp f_\xi|_{\ran u_n})_{\xi\in \mc{D}_n}\subset (\minsupp f_\al|_{\ran u_n})_{\al\in\mc{P}_n}\, .
\end{equation}
Hence by the 1st reduction the family $(y_{i_\xi})_{\xi\in\mc{D}_{n}\setminus \{\xi_0\}}$, where $f_{\xi_0}|_{\ran u_n}$ has the smallest $\minsupp$ among $f_\xi|_{\ran u_n}$, $\xi\in\mc{D}_n$, is $\mc{S}_{\frac{1}{5}n_{2j_n+1}}$-admissible, which yields the desired observation. 

Therefore, as $u_n$ is a scaled $(n_{2j_n+1}, m_{2j_n+1}^{-1})$-scc,  we obtain that
$$
\norm[m_{2j_n+1}\sum_{\xi\in \mc{D}_n}b_{i_\xi}y_{i_\xi}]\leq \frac{2\cdot 28}{m_{2j_n+1}}\,.
$$
Thus with the above error we may assume that  for all $\xi\in \mc{D}_{n}$ there is some  $i>i_{\xi}$ with $f_{\xi}(y_{i})\ne 0$.
Let now $\overline{E}_j^\xi=E_j^\xi\cap (\maxsupp y_{i_\xi},\maxsupp u_n]$  for any $\xi\in \mc{D}_n$ and element $E_j^\xi$ of a sequence defining $f_\xi$. 
It follows that for any $\xi\in\mc{D}_n$ we have 
\begin{equation}\label{admis0}
\minsupp f_\xi|_{\ran u_n}\leq \min \bigcup_{j\in A_l,l\in G_s,s\in F}\overline{E}^\xi_j\,.
\end{equation}
Notice that for any $\al\in\mc{P}_n$ there can be at most $m_{2j_n+1}$ many $\xi\in\mc{D}_n$ with  $\minsupp f_\al|_{\ran u_n}=\minsupp f_\xi|_{\ran u_n}$, as this relation implies that $\xi\prec\al$ and $ord(\al)\leq m_{2j_n+1}$ by the 1st reduction. As $\minsupp u_n>m_{2j_n+1}$, by \eqref{admis0} for any $\al\in\mc{P}_n$ we have
\begin{equation}\label{admis0.5}
\{\min \bigcup_{j\in A_l,l\in G_s,s\in F}\overline{E}^\xi_j: \ \minsupp f_\xi|_{\ran u_n}=\minsupp f_\al|_{\ran u_n}\}\in\mc{S}_{1}.
\end{equation}
Therefore by \eqref{admis-1}, \eqref{admis0.5}, the 1st reduction and Lemma \ref{schreier} we obtain that
\begin{equation}\label{admis1}
\{\min \bigcup_{j\in A_{l},l\in G_s,s\in F}\overline{E}^{\xi}_{j}:\ \xi\in\mc{D}_n\} \in\mc{S}_{\frac{1}{5}n_{2j_n+1}+1}^M=\mc{S}_{\frac{1}{5}n_{2j_n+1}+1}.
\end{equation}
On the other hand by Def. \ref{special} for any $(E_j^\xi)_{j\in A_l,l\in G_s, s\in F}$ with $\xi\in \mc{D}_n$, any sum $\cup_{j\in A_l}E_j^\xi$ contains $\supp f_\gamma$ for some $\gamma\in\suc(\suc \xi)$ provided $f_\xi|_{\cup_{j\in A_l}E_j^\xi}\neq 0$. Denote the set of all such $l$'s by $\overline{G}_s$. 
We shall prove that with the declared error for any interval $E_j^\xi$, for some $j\in A_l$, $l\in \overline{G}_s$, $s\in F$ defining some $\xi\in \mc{D}_n$ and any $i\in I_n$, $k\in K_i$ we have either $\ran y_{i,k}\cap E_j^\xi=\emptyset$ or $\ran y_{i,k}\subset E_j^\xi$, which ends the proof of the 4th reduction. 

Take any $l\in \overline{G}_s$, $s\in F$ attached to some $\xi\in \mc{D}_n$ and consider $\gamma\in\suc(\suc \xi)$ with $\cup_{j\in A_l}E_j^\xi\supset\supp f_\gamma$. Since $\ran u_n\subset \ran\mc{P}_n$ by the 1st reduction and by definition of $\mc{P}_n$ for such $\gamma$ we have either $\gamma\preceq \al$ for some $\al\in\mc{P}_n$ or $\ran f_\gamma\cap \ran u_n=\emptyset$. Therefore by the 1st reduction and as functionals $(f_\gamma)_{\gamma\in\suc(\suc\xi)}$ have successive supports,  for any $\xi\in\mc{D}_n$ we have
\begin{equation}\label{admis2}
\{\min(\bigcup_{j\in A_l}E_j^\xi\cap\ran u_n):\ l\in \overline{G}_s,s\in F\}\in\mc{S}_{\frac{1}{5}n_{2j_n+1}+1} 
\end{equation}
Putting together \eqref{admis1} and \eqref{admis2}, by Lemma \ref{schreier} we obtain that 
\begin{equation}\label{admis3}
\{\min(\bigcup_{j\in A_l}\overline{E}_j^\xi\cap\ran u_n):\ l\in
\overline{G}_s,s\in F, \xi\in \mc{D}_n\}
\in
\mc{S}_{\frac{2}{5}n_{2j_n+1}+2} 
\end{equation}
Set
\begin{align*}
 J_n=\{\min I_n\}\cup\{i\in I_{n}:\min\bigcup_{j\in A_l}\overline{E}^{\xi}_j\in(& \maxsupp y_{i-1},\maxsupp y_i]
 \\
 &\text{ for some } l\in \overline{G}_s,s\in F, \xi\in \mc{D}_{n}\}\,.
\end{align*} 
Using \eqref{admis3}, as $u_n$ is a scaled $(n_{2j_n+1}, m_{2j_n+1}^{-1})$-scc, we obtain that 
$$
\norm[m_{2j_n+1}\sum_{i\in J_n}b_iy_i]\leq \frac{2\cdot 28}{m_{2j_n+1}}\,.
$$ 
Thus for any $i\in I_n\setminus J_n$ and $(E_j^\xi)_{j\in A_l}$ with $l\in \overline{G}_s,s\in F$, $\xi\in \mc{D}_n$, if $\bigcup_{j\in A_l}\overline{E}_j^\xi\cap\ran y_i\neq\emptyset$, then $\min \bigcup_{j\in A_l}\overline{E}_j^\xi\leq \maxsupp y_{i-1}$. By (4) in Def. \ref{special} it follows that the family $(\overline{E}_j^\xi\cap\ran y_i)_{j\in A_l}$ is $\mc{S}_{\rho(\maxsupp y_{i-1})}$-admissible and consequently by \eqref{admis3} and Lemma \ref{schreier} for any $i\in I_n$ we have
$$
\{\min (\overline{E}_j^\xi\cap\ran y_i):\ j\in A_l, l\in \overline{G}_s,s\in F, \xi\in\mc{D}_n\}\in
\mc{S}_{\rho(\maxsupp y_{i-1})+n_{2j_n+1}}
$$ 
As $y_i$ is a scaled $(n_{2j_i}, m_{2j_i}^{-1})$-scc,  by the condition (E) we obtain that 
$$
\norm[m_{2j_i}\sum_{k\in L_i}b_{i,k}y_{i,k}]\leq \frac{2\cdot 2}{m_{2j_i}},
$$ 
where $L_i$ denotes the set of all $k\in K_i$ such that $\min \overline{E}_j^\xi$ or $\max \overline{E}_j^\xi$ belongs to the interval $(\maxsupp y_{i,k-1},\maxsupp y_{i,k}]$ (in case $k>\min K_i$) or to the interval $(\maxsupp y_{i-1,\max K_{i-1}},\maxsupp y_{i,\min K_i}]$  (in case $k=\min K_i$)  for some element $E_j^\xi$ of a sequence $(E_j^\xi)_{j\in A_l,l\in \overline{G}_s,s\in F}$ defining $f_\xi$ for some $\xi\in \mc{D}_{n}$. It follows that 
$$
\norm[m_{2j_n+1}\sum_{i\in I_n\setminus J_n}b_im_{2j_i}\sum_{k\in L_i}b_{i,k}y_{i,k}]\leq m_{2j_n+1}\sum_{i\in I_n}\frac{4b_i}{m_{2j_i}}\leq \frac{1}{m_{2j_n+1}}\,.
$$
As $\min E_j^\xi,\max E_j^\xi\in \{\min \overline{E}_j^\xi, \max\overline{E}_j^\xi\}\cup [1,\maxsupp y_{i_\xi}]\cup (\maxsupp u_n,\infty)$, 
after erasing $(y_{i_\xi})_{\xi\in\mc{D}_n}$, $(y_i)_{i\in J_n}$ and
$(y_{i,k})_{k\in L_i,i\in\ I_n}$ 
with error $113m_{2j_n+1}^{-1}< m_{2j_n}^{-1}$ we obtain that for any interval $E_j^\xi$, with $j\in A_l$, $l\in \overline{G}_s$, $s\in F$ defining some $\xi\in \mc{D}_n$ and any $i\in I_n$, $k\in K_i$ we have either $\ran y_{i,k}\cap E_j^\xi=\emptyset$ or $\ran y_{i,k}\subset E_j^\xi$, which proves the 4th reduction.

\

The total error we paid for reductions is $\sum_n5m_{2j_n}^{-1}\leq 1/2$ by (A).

\textbf{Replacement}. By $\tilde{f}$ denote the restriction of $f$ obtained by the above reduction. By the above $\tilde{f}(\sum_na_nu_n)\geq 1/2$.

Fix now $i\in I_n, k\in J_i$ and denote by $\Gamma_{i,k}$ the collection of all $\gamma\in\mt$ with $\gamma\in\suc(\suc(\al))$, for some $\al\in \mc{P}_n$, with $\supp f_\gamma\subset\ran y_{i,k}$. By the 3rd reduction $\supp\Gamma_{i,k}\supset\supp y_{i,k}\cap\supp f$. We pick $\gamma_{i,k}\in\Gamma_{i,k}$ with the biggest $tag(\gamma_{i,k})$, erase all other $f_\gamma$ with $\gamma\in\Gamma_{i,k}$ and replace $f_{\gamma_{i,k}}$ by $z_{i,k}^*$. Denote the new functional defined by the modified tree by $g$. 

Notice first that the replacement is correct, i.e. $g\in W$, since the change does not affect the sequences $(E_j)_j=(\ran y_{i,k})_{i,k}$ defining special functionals $f_\al$, $\al\in\mc{P}_n$, nor any other sequence in $\mc{D}_n$ by the 4th reduction. Indeed, assume $\gamma_{i,k}\succ\xi$ for some $\xi\in\mc{D}_n$. Then $\supp f_\xi\cap \ran y_{i,k}\neq\emptyset$ thus by the 4th reduction $\ran z_{i,k}^*=\ran y_{i,k}\subset E_j^\xi$ for any element $E_j^\xi$ of a sequence defining $f_\xi$. 
 
Notice also that for $\gamma_1\neq\gamma_2$ with
$\gamma_1,\gamma_2\in\Gamma_{i,k}$,  $\gamma_1\in\suc(\beta_1)$ and
$\gamma_2\in\suc(\beta_2)$ we have, by definition of a special
functional and the 3rd reduction, that $\beta_1,\beta_2$ are incomparable. Therefore $(f_\gamma)_{\gamma\in\Gamma_{i,k}}$ is $\mc{S}_{n_{2j_n+1}-1+n_{2j_i}}$-allowable, and using Lemma \ref{1.4} we obtain 
$$ 
\tilde{f}(y_{i,k})=\sum_{\gamma\in\Gamma_{i,k}}tag(\gamma)f_\gamma(y_{i,k})\leq 3\ tag(\gamma_{i,k})=3\ tag(\gamma_{i,k}) z^*_{i,k}(z_{i,k})
$$
Thus we have $1/2\leq\tilde{f}(\sum_{n}a_nu_n)\leq 3g(\sum_{n}a_nv_n)$, which ends the proof. 
\end{proof}
\begin{definition}\cite{fr1} A Banach space with a basis $(e_n)$ is called \textit{tight by range} if for any block subspace $Y$ of $X$ spanned by a block sequence $(y_n)$, $Y$ does not embed into $[e_i:\ i\not\in\cup_n\ran y_n]$. 
\end{definition}
It was shown in \cite{fr1} that a Banach space is tight by range iff 
any its two block subspaces with disjoint ranges are incomparable.
\begin{theorem}
 The space $\mc{X}_{cr}$ is tight by range. 
\end{theorem}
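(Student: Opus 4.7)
The plan is to invoke the characterization recalled just above the theorem, so it suffices to show that any two block subspaces of $\mc{X}_{cr}$ with disjoint ranges are incomparable. Suppose for contradiction that $Y = [\tilde{y}_n]$ and $Z = [\tilde{z}_n]$ have disjoint ranges and that $T: Y \to Z$ is an embedding with $\|T\|\|T^{-1}\|\leq C$. After a standard perturbation and passage to subsequences I may assume that $T\tilde{y}_n$ is arbitrarily close to a block vector $\tilde{z}_n' \in [\tilde{z}_n]$, so that $(\tilde{y}_n)$ and $(\tilde{z}_n')$ are $C'$-equivalent block sequences with $(\cup_n \ran \tilde{y}_n) \cap (\cup_n \ran \tilde{z}_n') = \emptyset$.

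Next I reproduce the construction from the quasi-minimality proof on this pair to obtain parallel scaled special convex combinations $u_n = m_{2j_n+1}\sum_{i\in I_n} b_i y_i \in Y$ and $v_n = m_{2j_n+1}\sum_{i\in I_n} b_i z_i \in Z$ satisfying conditions (A)--(C), the only modification being that $\ran y_{i,k}$ is now forced to be disjoint from $\ran z_{i,k}$. Because $u_n, v_n$ are the same linear combinations of $(\tilde{y}_n)$ and $(\tilde{z}_n')$ respectively, the $C'$-equivalence gives $\|u_n\| \leq C'\|v_n\|$. I then arrange $y_i^* = m_{2j_i}^{-1}\sum_{k\in K_i} y_{i,k}^*$ so that $(y_i^*)_{i\in I_n}$ is a valid $(2j_n+1)$-dependent sequence in $W$, defined via the coding $\sigma$ by the intervals $(\ran y_{i,k})_{k\in K_i,i\in I_n}$ lying in $\cup_n \ran \tilde{y}_n$. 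Since $y_i^*(y_i) = 1$, the special functional $f = m_{2j_n+1}^{-1}\sum_i y_i^*$ belongs to $W$ and satisfies $f(u_n) = \sum_i b_i = 1$, whence $\|u_n\| \geq 1$.

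The main step is the upper bound $\|v_n\| \leq c\, m_{2j_n}^{-1}$ for an absolute constant $c$, which combined with the above yields the contradiction $1 \leq \|u_n\| \leq C'\|v_n\| \leq C' c\, m_{2j_n}^{-1} < 1$ for $j_n$ sufficiently large. To prove it, consider any norming $g \in W$ of $v_n$; if every weight in the tree-analysis of $g$ exceeds $m_{2j_n+1}^{-1}$, Lemma~\ref{f4} directly gives $g(v_n) \leq m_{2j_n}^{-1}$. Otherwise I apply the four reductions of the quasi-minimality proof (which only use the scaled-scc-of-RIS structure of $v_n$ and hence transfer verbatim), reducing to the case in which $g|_{\ran v_n}$ is a $(2j_n+1)$-special functional $g = m_{2j_n+1}^{-1}\sum_i g_i$ with $g_i = m_{2t_i}^{-1}\sum_k g_{i,k}$ and $\supp g_{i,k} \subset \bigcup_{j\in A_k} E_j$ for some defining sequence $(E_j)_{j\in A} \in \mc{G}$. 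The decisive observation is that since $\supp g_{i,k}$ meets $\cup_n \ran \tilde{z}_n'$, the sequence $(E_j)_{j\in A}$ cannot coincide with the $Y$-side sequence $(\ran y_{i',k})_{k\in K_{i'}, i'\leq i}$ used to define $(y_i^*)$; injectivity of $\sigma$ then forces $2t_{i+1} \neq 2j_{i+1}$ for every $i \geq 1$.

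The main technical obstacle is to convert this coding mismatch into a quantitative decay. For each $i \geq 2$, $2t_i \neq 2j_i$ together with Lemma~\ref{f3} applied to the scaled $(n_{2j_i},m_{2j_i}^{-2})$-scc $z_i$ against the weight-$m_{2t_i}^{-1}$ functional $g_i$ forces $|g_i(z_i)| \leq 28\min(m_{2t_i},m_{2j_i})^{-1}$; the off-diagonal terms $g_i(z_{i'})$ with $i \neq i'$ are controlled by the admissibility of $(g_i)$ obtained from the reductions together with the allowability estimate \eqref{f3a}. The single possibly-large diagonal term at $i=1$ (where $2t_1,2j_1 \in L_1$ may accidentally coincide) is absorbed by $b_1 < m_{2j_n+1}^{-2}$ coming from the basic scc condition. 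Summing over $i \in I_n$ with weights $b_i$ of total $1$ yields $|g(v_n)| \leq c\, m_{2j_n}^{-1}$, up to the cumulative reduction error which is of the same order by the bookkeeping of the quasi-minimality proof.
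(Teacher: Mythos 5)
Your global strategy (reduce to incomparability of two block subspaces with disjoint ranges, run the quasi-minimality construction with the coding driven by the $Y$-side intervals, and show the $Z$-side vectors $v_n$ have small norm) is different from the paper's, but its decisive step is wrong as stated. You claim that injectivity of $\sigma$ forces $2t_{i+1}\neq 2j_{i+1}$ for \emph{every} $i\geq 1$ because the defining sequence of a special functional acting on $v_n$ ``cannot coincide'' with the $Y$-side sequence. It can, on an initial segment: an adversarial special functional may take as its defining intervals exactly $(\ran y_{i',k})_{k\in K_{i'},\,i'\leq i}$ up to an arbitrary step $i$ (its first $i$ components are then supported in $\bigcup\ran y_{i',k}$ and simply vanish on $\ran v_n$ --- which is allowed, since restrictions of special functionals are again special, cf.\ Remark \ref{rem-k}(c)), and then place its $(i+1)$-th component, of the \emph{correct} weight $m_{2j_{i+1}}^{-1}$, with support inside $\ran z_{i+1}$; only from that point on does injectivity of $\sigma$ force mismatches. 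So there is not just ``a single possibly-large diagonal term at $i=1$'': each special node $\al\in\mc{P}_n$ can contribute one full-size matching term $b_i z_i^*(z_i)$ at a different index $i$, and these cannot be absorbed by the smallness of one coefficient $b_1$. Controlling their total contribution requires exactly the counting argument of the paper's 3rd reduction (at most one such $i$ per $\al$, the collection of such $i$'s forming a family admissible enough that $\norm[m_{2j_n+1}\sum_{i}b_iz_i]$ over it is $O(m_{2j_n+1}^{-1})$, via the sets $K_n$, $F_n$), which your sketch does not supply. Two further gaps: the reductions do not transfer ``verbatim'', since the 3rd and 4th use the coding data (D) and the growth condition (E) formulated in terms of the $Y$-side supports; and for Lemmas \ref{f3}, \ref{f4} and the reductions to apply to $v_n$ at all you must check that $v_n$ is a genuine scaled scc of RIS on the $Z$-side, which with disjoint (rather than equal) ranges needs the $\minsupp$-interlacing observation, not just ``the same linear combination''.

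For comparison, the paper avoids estimating the norm of a $Z$-side copy altogether: given $T$ with $\supp T(z_r)\cap\ran z_r=\emptyset$, it builds for each $j$ a vector $x\in[z_r]$ normed by a special functional whose defining intervals are the ranges $\ran z_r$ of the \emph{original} block sequence (this is where the real work lies: the growth of the weights is tamed so that condition (E$'$), i.e.\ the $\rho$-admissibility in Definition \ref{special}(4), can be met on the lowest level), and then shows $\norm[Tx]\leq 4/m_{2j}$ using only the 1st--3rd reductions together with the disjointness, since after the 3rd reduction all components $f^\al_{i,k}$ with $k>\min K_i$ are supported in $\bigcup_j\ran z_j$ and hence kill $Tx$. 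If you want to salvage your route, you must both fix the ``one matching component per special node'' bookkeeping described above and verify the scc structure of $v_n$; as written the key upper bound on $\norm[v_n]$ is unproved.
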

\begin{proof}
Let $(z_r)_r$ be a block sequence. We show that there exists  no bounded operator  $T$ such that  $\supp T(z_r)\cap\ran(z_r)=\emptyset$ and  $T$ can be extended  to an isomorphism from $[z_r: r\in\N]$  to $X$, which will prove that $\mc{X}_{cr}$ is tight by range. By standard arguments we may assume that $\norm[T]\leq 1$,  $(Tz_r)_j$ is a block sequence and $\ran(z_r+Tz_r)<\ran(z_{r+1}+Tz_{r+1})$ for every $r\in\N$. Passining to further subsequence  we may assume that either
$\minsupp z_{r}<\minsupp Tz_{r}$ for all $r$ or $\minsupp z_{r}>\minsupp Tz_{r}$ for all $r$. Notice that if  $\sum_ra_{r}z_{r}$ is an $(n,\e)$-scc, then in the first case $\sum_{r}a_{r}Tz_{r}$ is also an $(n,\e)$-scc, while in the second $\sum_{r}a_{r}Tz_{r}$ is an $(n,\e)$-scc up to the first element. With this observation we can adapt here the argument of the proof of quasi-minimality of $\mc{X}_{cr}$. 

For any fixed $j\in\N$ we construct a special sequence $(x^*_{i})_{i\in F}$ and a block sequence $(x_{i})_{i\in F}\subset [z_r: r\in\N]$ such that
\begin{enumerate}
\item[(A')] $x=m_{2j+1}\sum_{i\in F}b_ix_i$ is a scaled $(n_{2j+1},m_{2j+1}^{-2})$-scc of 28-RIS $(x_i)_{i\in F}$,\\ $x^*=m_{2j+1}^{-1}\sum_{i\in F}x_i^*$ is a special functional in $W$,
\item[(B')] $x_i=m_{2j_{i}}\sum_{k\in K_{i}}b_{i,k}x_{i,k}$ is a scaled $(n_{2j_i},m_{2j_{i}}^{-3})$-scc of 2-RIS $(x_{i,k})$, \\
$x^*_i=m_{2j_{i}}^{-1}\sum_{k\in K_{i}}x^*_{i,k}$ for any $i\in F$,
\item[(C')] $x_{i,k}$ is a normalized $(n_{2j_{i,k}}, m_{2j_{i,k}}^{-2})$-scc supported on $(z_{r})_{r\in A_k}$, \\ $x^*_{i,k}(x_{i,k})=1$, $\ran x_{i,k}=\ran x_{i,k}^*$ for any $k\in K_i$, $i\in F$,
\item[(D')] $(x^*_{i})_{i\in F}$ is a $(2j+1)$-dependent sequence defined by $(E_r)_{r\in \tilde{A}_{k},k\in K_i, i\in F}$, where $\tilde{A}_k=A_k$ and 
$E_r=\ran z_r$ for any $r\in \tilde{A}_k$, $k\in K_i\setminus \{\min K_i\}$, $i\in F$,\\ 
and $\tilde{A}_{\min K_i}$ is a singleton indexing the interval $\ran x_{i,\min K_i}$ for each $i\in F$,
\item[(E')] $A_{k}\in\mc{S}_{\rho(\maxsupp x_{i,k-1})}$ for any $k\in
  K_i\setminus\{\min K_i\}$, $i\in F$.
\end{enumerate}
Notice that condition (E') ensures that in (D') we have a correctly defined depended sequence. We can ensure conditions (B'), (C') and (E') by Lemma \ref{f1} and definition of the function $\rho$. Indeed, having chosen $x_{i,k-1}$ for $i\in F$ and $k\in K_i\setminus\{\min K_i\}$ we are able to choose the next element $x_{i,k}$ of a RIS with weight $m_{j_{i,k}}$ satisfying $\maxsupp x_{i,k-1}\leq m_{j_{i,k}}/m_{j_{i,k-1}}$ and supported on $[z_r:r\in A_k]$ for some  $A_k\in\mc{S}_{\rho(\maxsupp x_{i,k-1})}$ by definition of $\rho$ and the condition \eqref{rho}, as $\rho(\maxsupp x_{i,k-1})$ enables to choose an $(n_{2s},m_{2s}^{-2})$-scc with weight $m_{2s}\geq \maxsupp x_{i,k-1}^2\geq \maxsupp x_{i,k-1}m_{j_{i,k-1}}$.   

Notice that the construction of $x$ differs from the choice of vectors $u_n$ in one aspect - in the speed of growth of $(m_{2j_i})$. In the previous case we demanded high speed of growth in condition (E), here we tame the speed of growth $(m_{2j_i})$ as much as possible, in order to obtain condition (E') and in consequence to be able to use $(\ran z_j)_j$ as the intervals defining special functional. Recall that in previous case we took as intervals defining special functionals the sets $(\ran y_{i,k})_{i,k}$, so we used vectors on "higher" level. Again by $(\beta)$ we can assume that all coefficients of $f$ and $x$ are non-negative.

Notice that by (A') we have $\norm[x]\geq x^*(x)\geq 1$. In order to estimate $\norm[Tx]$ we take $f\in W$ with a tree-analysis and repeat 1st, 2nd and 3rd reductions from the proof of the previous theorem for one vector $x$ instead of linear combination of $(u_n)$. Recall that condition (E) was required only in the last reduction we shall not repeat here. Within 1st, 2nd and 3rd reductions we define as before 
\begin{align*}
\mc{P}=\{\al\in\mt: \ &\supp f_\al\cap\supp Tx\neq\emptyset\text{ and }\\ &\al \text{  is minimal in }\mt \text{ with }w(f_\al)\leq m_{2j+1}^{-1}\}
\end{align*}
obtaining after reductions that there is a restriction $\tilde{f}$ of the functional $f$ such that 

\noindent (1) $f(Tx)\leq\tilde{f}(Tx)+4m_{2j}^{-1}$ ,

\noindent (2) $\supp \tilde{f}\subset\supp\tilde{\mc{P}}$, where 
\begin{align*}
\tilde{\mc{P}}=\{\al\in\mt: \ &\supp f_\al\cap\supp Tx\neq\emptyset\text{ and }\\ & \al\text{  is minimal in }\mt \text{ with } w(f_\al)= m_{2j+1}^{-1}\},
\end{align*}
\noindent (3) for any $\al\in\tilde{\mc{P}}$ the special functional $f_\al|_{\ran Tx}$ is defined by the sequence $(E_r)_{r\in \tilde{A}_{k}, k\in K_i,i\in F}$, in particular $f_\al|_{\ran Tx}=\frac{1}{m_{2j+1}}\sum_{\supp f_\al\cap \supp Tx_i\neq\emptyset}f_i^\al$, with  $f_i^\al=\frac{1}{m_{2j_i}}\sum_{k\in K_i}f^\al_{i,k}$ and $\supp f_{i,k}^\al\subset\cup_{j\in A_{k}}\ran z_j$ for each $k\in K_i\setminus \{\min K_i\}$, $i\in F$.

Therefore, as $\supp T(z_j)\cap\ran z_j=\emptyset$, we obtain 
$$
\tilde{f}(Tx_i)\leq \sum_{\al\in \tilde{\mc{P}}} \frac{w(f^\al_i)}{m_{2j_i}}f^\al_{i,\min K_i}(Tx_i)\leq \frac{16}{m_{2j_i}}
$$
as $\sum_{\al\in \tilde{\mc{P}}} w(f^\al_i)f^\al_{i,\min K_i}$ is a norming functional obtained from $\tilde{f}$ by replacing in its tree-analysis each $f^\al_i$ by $f^\al_{i,\min K_i}$. Finally we have 
$$
\norm[Tx]\leq \frac{3}{m_{2j}}+\sum_{i\in F}\frac{16}{m_{2j_i}}\leq \frac{4}{m_{2j}}\,.
$$
Since $j\in\N$ is arbitrarily large, the above shows that $T$ is not an isomorphism onto image. 
\end{proof}
\begin{remark} 
Consider a Banach space $\mc{Y}$ satisfying conditions
$(\al)$-$(\delta)$ with respect to $(\mc{A}_n)_{n\in\N}$-admissible sets
instead of  Schreier admissible or allowable sets. Then by repeating the reasoning above we
obtain another example of a Banach space with unconditional basis,
which is quasi-minimal and tight by range, as in \cite{amp}.  As in the
example of \cite{amp} the space $\mc{Y}$ is also locally minimal, as
saturated with $\ell_\infty^n$'s (see the next section).
\end{remark}
\section{Local minimality of the space $\mc{X}_{(4)}$ of \cite{amp}}
We recall first briefly the construction of the norming set $W_4$ of the space $\mc{X}_{(4)}$ constructed in \cite{amp}. We fix two sequences of natural numbers $(m_{j})_{j}$ and $(n_{j})_{j}$ and a partition of $\N$ into two infinite sets $L_{1},L_{2}$ as in the definition of $W$ in Section 2. Let 
$W_4$ be the smallest subset of $c_{00}(\N)$ satisfying the following
\begin{enumerate}
\item $(\pm e_n)_n\in W_4$, where $(e_n)_n$ is the canonical basis of $c_{00}(\N)$,
\item for any $f\in W_4$ and $g\in c_{00}(\N)$ with $\vvert[f]=\vvert[g]$ also $g\in W_4$,
\item $W_4$ is closed under the projection on intervals  of $\N$,
\item $W_4$ is closed  under the $(\mc{A}_{n_{2j}},m_{2j}^{-1})$-ope\-ra\-tions on any block se\-quen\-ces,
\item $W_4$ is closed under the  $(\mc{A}_{n_{2j+1}},m_{2j+1}^{-1})$-opera\-tions on $(2j+1)$-special sequences,
\item  $W_4$ is closed under the  $G$-operation, defined as follows. For any set $F=\{n_{1}<\dots<n_{2q}\}\subset\N$ which is Schreier (i.e. $2q\leq n_1$) we set
$$
S_{F}f=\chi_{\cup_{p=1}^{q}[n_{2p-1}, n_{2p})}f.
$$
The $G$-operation associates with any $f\in c_{00}$ the vector $g=\frac{1}{2}S_{F}f$, for any $F$ as above.
\end{enumerate}
In order to complete the definition we define special sequences. A  sequence $f_{1}<f_{2}<\dots<f_{n_{2j+1}}$ in $W_4$ is a $(2j+1)$-\textit{special sequence}, if the following are satisfied
\begin{enumerate}
\item  for every $i=1,\dots,n_{2j+1}$, $w(f_{i})=m_{2j_{i}}$ where   $j_{1}\in L_{1}$, $j_i\in L_2$ for any $i>1$ and
$n_{2j+1}<m_{2j_{1}}<\dots<m_{2j_{n_{2j+1}}}$,
\item $m_{2j_{i+1}}>(\maxsupp f_{i})m_{2j_{i}} $ for any $1\leq  i< n_{2j+1}$,
\item for $1< i\leq n_{2j+1}$ the sequence $(\vvert[f_1],\vvert[f_2],\dots,\vvert[f_{i-1}])$ is uniquely determined by $w(f_{i})$.
\end{enumerate}
Notice that the norming set $K$ of the mixed Tsirelson space $T[(\mc{A}_{n_j},\frac{1}{m_j} )_j]$ is closed under the projections on subsets of $\N$ and $m_1=2$. It follows that $W_4\subset K$. This observation together with unconditionality of the basis in $\mc{X}_{(4)}$ allows for repeating in the space $\mc{X}_{(4)}$ the argument of \cite{m} that $\ell_\infty$ is finitely disjointly representable in every infinitely dimensional subspace of $T[(\mc{A}_{n_j}, \frac{1}{m_j})_j]$. The quoted reasoning uses only the estimation of the action of any functional $f\in K$ on a linear combination of some block sequence by action of another functional $g\in K$ on an analogous combination of the basis $(e_n)$ by means of modifying the tree-analysis of $f$ into the tree-analysis of $g$. As $W_4\subset K$ we can adapt the mentioned reasoning of \cite{m} in the space $\mc{X}_{(4)}$, obtaining the following theorem, which answers the question (2) of \cite{fr1}.
\begin{theorem}
The space $\mc{X}_{(4)}$ is locally minimal, i.e. $\mc{X}_{(4)}$ is finitely represented in any of its infinitely dimensional subspaces. 
\end{theorem}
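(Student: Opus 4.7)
The plan is to transfer the argument of \cite{m} — showing that $\ell_{\infty}$ is finitely disjointly representable in any infinite-dimensional subspace of the mixed Tsirelson space $T=T[(\mc{A}_{n_j},m_j^{-1})_j]$ — to $\mc{X}_{(4)}$, via the inclusion $W_4\subset K$ observed just before the theorem. By 1-unconditionality of the basis of $\mc{X}_{(4)}$ it suffices to treat block subspaces $Y$. Since every finite-dimensional Banach space embeds with distortion $1+\e$ into some $\ell_{\infty}^{N}$, local minimality of $\mc{X}_{(4)}$ will follow once one shows that for every $n$ and every block subspace $Y$ there is a block sequence $y_{1},\dots,y_{n}\in Y$ which is $C$-equivalent to the unit basis of $\ell_\infty^n$, with $C$ absolute.

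In $Y$, I would construct $(y_{i})_{i=1}^{n}$ as a sequence of seminormalized scaled basic special convex combinations with respect to $(\mc{A}_{n_j})$-families, built on a rapidly increasing block sequence in $Y$ — the direct analog of Def.~\ref{dRIS} and Lemma~\ref{f1}, whose proofs carry over from $(\mc{S}_{n_j},m_j^{-1})$-operations to $(\mc{A}_{n_j},m_j^{-1})$-operations. Seminormalization in $\mc{X}_{(4)}$ is guaranteed because $W_4$ contains a matching $(\mc{A}_{n_{2j}},m_{2j}^{-1})$-norming functional for each $y_i$ separately. The lower bound $\|\sum_{i}a_{i}y_{i}\|\ge\tfrac{1}{2}\max_{i}|a_{i}|$ is then immediate from 1-unconditionality, and the remaining task is the matching upper bound $\|\sum_i a_iy_i\|\le C\max_i|a_i|$. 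This upper bound is handled by the tree-replacement technique of \cite{m}: given any $f\in W_4$ with tree-analysis $(f_\al)_{\al\in\mt}$, one regards $(f_\al)$ as a $K$-tree and produces some $g\in K$ dominating $f(\sum_i a_i y_i)$ by its action on an analogous basis combination $\sum_i a_i e_{k_i}$, which is then estimated in $T$ by the $\ell_\infty^n$-bound of \cite{m}.

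The main obstacle I expect is the bookkeeping required to interpret every node of a $W_4$-tree-analysis as a legitimate $K$-node. The $(\mc{A}_{n_{2j+1}},m_{2j+1}^{-1})$-nodes attached to special sequences pose no problem in $K$, which has the same operation unrestricted. The $G$-operation nodes $\tfrac{1}{2}S_Ff_\al$ are absorbed by the fact that $K$ is closed under projections on subsets of $\N$, together with the singleton $(\mc{A}_{n_1},m_1^{-1})$-operation (as $m_1=2$). Since the replacement produces a functional in the richer set $K$ rather than in $W_4$ itself, the $W_4$-specific coding restrictions on weights and admissibility never need to be preserved, so no rigidity from Def.~\ref{special}-type conditions enters the bound. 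Once this identification is made precise, the replacement procedure of \cite{m} goes through verbatim, yielding the uniform $\ell_\infty^n$-embedding in each block subspace $Y$ and hence local minimality of $\mc{X}_{(4)}$.
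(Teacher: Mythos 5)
You follow the paper's route --- reduce to block subspaces, observe $W_4\subset K$ (with the $G$-operation absorbed exactly as you say, by closure of $K$ under projections onto subsets of $\N$ together with the singleton $(\mc{A}_{n_1},m_1^{-1})$-operation, $m_1=2$), and import the tree-analysis replacement estimates of \cite{m} --- but one intermediate claim is false: that in every block subspace one can find, for each $n$, a \emph{successive} block sequence $y_1<\dots<y_n$ which is $C$-equivalent to the unit vector basis of $\ell_\infty^n$ with an absolute constant $C$. This is impossible in $\mc{X}_{(4)}$: $W_4$ is closed under projections onto intervals and under $(\mc{A}_{n_{2j}},m_{2j}^{-1})$-operations on \emph{arbitrary} block sequences, so if $y_1<\dots<y_N$ with $N=n_{2j}$ were $C$-equivalent to the $\ell_\infty^N$-basis, then choosing $f_i\in W_4$ with $\supp f_i\subset\ran y_i$ and $f_i(y_i)>(2C)^{-1}$ and applying the $(\mc{A}_{n_{2j}},m_{2j}^{-1})$-operation to $(f_1,\dots,f_N)$ would give $C\geq \nrm{y_1+\dots+y_N}\geq n_{2j}/(2Cm_{2j})$, and for the parameters of \cite{amp} (as in all Gowers--Maurey-type constructions on the families $\mc{A}_{n_j}$, one has $n_{2j}/m_{2j}\to\infty$) this fails for large $j$. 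This is precisely why \cite{m}, and the theorem as the paper invokes it, assert finite \emph{disjoint} representability of $\ell_\infty$: the vectors realizing $\ell_\infty^n$ are disjointly supported, \emph{interlaced} scaled averages built on a rapidly increasing sequence, arranged so that no single $(\mc{A}_{n_j},m_j^{-1})$-operation on block sequences can act efficiently on all of them at once. Your successive configuration cannot be ``estimated in $T$ by the $\ell_\infty^n$-bound of \cite{m}'', because that bound is proved only for the interlaced one.

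The repair is simply to take over the interlaced configuration of \cite{m}: the resulting vectors are still finite linear combinations of a block sequence of $Y$, hence lie in $Y$, and since every finite-dimensional space embeds almost isometrically into some $\ell_\infty^N$, local minimality follows exactly as you intend. With this change the remaining ingredients of your proposal --- seminormalization of each vector by functionals available in $W_4$, the lower bound from $1$-unconditionality (equally valid for disjointly supported vectors), and the domination of any $f\in W_4\subset K$ by a $K$-functional acting on an analogous combination of the basis via modification of the tree-analysis --- coincide with the paper's (sketched) argument. Note also that the necessity of interlacing is where the unmodified, block-operation nature of $W_4$ enters: in the modified Schreier-based space $\mc{X}_{cr}$ the allowable operations act on disjointly supported families and destroy this device, consistently with $\mc{X}_{cr}$ being tight with constants and hence not locally minimal.
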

\subsection*{Acknowledgements}
This research was partly   supported by the program  ``$A\rho\iota\sigma\tau\epsilon \iota\alpha"$

\end{document}